\documentclass{article}
\usepackage{amsmath}
\usepackage{amsfonts}
\usepackage{amsthm}
\usepackage{amssymb}
\usepackage{color}

\newtheorem{theorem}{Theorem}[section]
\newtheorem{lemma}[theorem]{Lemma}
\newtheorem{proposition}[theorem]{Proposition}
\newtheorem{observation}[theorem]{Observation}

\newtheorem{claim}[theorem]{Claim}

\newtheorem{note}[theorem]{Note}

\theoremstyle{definition}
\newtheorem{definition}[theorem]{Definition}
\theoremstyle{remark}
\newtheorem{remark}[theorem]{Remark}

\oddsidemargin 0.4truecm   % -0.7truecm
\evensidemargin 0pt \marginparwidth 40pt \marginparsep 10pt

% vertical spacing:
\topmargin -1.7truecm \headsep 40pt \textheight 21.5truecm
\textwidth 15truecm

%%%%%%%%%%%%%%%%%%%%    Mikhail Ostrovskii's stuff

\def\cay{\hskip0.02cm{\rm Cay}\hskip0.01cm}

\def\f2{\mathbb{F}_2}

\def\lip{\hskip0.02cm{\rm Lip}\hskip0.01cm}

\newcommand{\ep}{\varepsilon}

\newcommand{\diam}{{\rm diam}\hskip0.02cm}

\begin{document}

\title{\LARGE Metric characterizations of superreflexivity in terms of word hyperbolic groups and finite graphs}

\author{Mikhail Ostrovskii\footnote{Supported in part by NSF
DMS-1201269.}{ }\footnote{The author would like to thank
Goulnara~Arzhantseva, William~B.~Johnson, Assaf~Naor,
David~Rosenthal, Mark~Sapir, and Gideon~Schechtman for useful
information and discussions related to the subject of this
paper.}\\
\\
Department of Mathematics and Computer Science\\
St. John's University\\
8000 Utopia Parkway\\
Queens, NY 11439\\
USA\\
e-mail: {\tt ostrovsm@stjohns.edu}}

\date{\today}
\maketitle

\begin{large}

\noindent{\bf Abstract.} We show that superreflexivity can be
characterized in terms of bilipschitz embeddability of word
hyperbolic groups. We compare characterizations of
superreflexivity in terms of diamond graphs and binary trees. We
show that there exist sequences of series-parallel graphs of
increasing topological complexity which admit uniformly
bilipschitz embeddings into a Hilbert space, and thus do not
characterize superreflexivity.
\medskip

{\small \noindent{\bf Keywords:} bi-Lipschitz embedding; diamond
graphs; series-parallel graph; superreflexivity; word hyperbolic
group
\medskip

\noindent{\bf 2010 Mathematics Subject Classification.} Primary:
46B85; Secondary: 05C12, 20F67, 46B07.}

%\tableofcontents

\section{Introduction}

The theory of superreflexive Banach spaces was created by James
(see \cite{Jam72a,Jam72b}), an important building block for its
foundation was added by Enflo \cite{Enf72}. One of the equivalent
definitions is: A Banach space $X$ is {\it superreflexive} if and
only if it has an equivalent uniformly convex norm. The theory of
superreflexive spaces is a rich theory, one of the reasons for
this richness is that superreflexive spaces can be characterized
in many different ways, see accounts in \cite{Bea82, BL00, DGZ93,
Dul78, Ost13, Pis11}.\medskip

In addition to having many ``linear'' characterizations, the class
of superreflexive Banach spaces admits many purely metric
characterizations (that is, characterizations which do not refer
to the linear structure of the space). The purpose of this paper
is to obtain some new results on metric characterizations of
superreflexivity. We start by reminding the known metric
characterizations of superreflexivity.
\medskip

The first metric characterization of superreflexivity was
discovered by Bourgain \cite{Bou86}. Recall that a {\it binary
tree of depth $n$} is a finite graph $T_n$ in which each vertex is
represented by a finite (possibly empty) sequence of $0$s and
$1$s, of length at most $n$. Two vertices are adjacent if the
sequence corresponding to one of them is obtained from the
sequence corresponding to the other by adding one term on the
right. We consider a binary tree of depth $n$ as a metric space
whose elements are vertices of the tree, and the distance is the
shortest path distance.

\begin{theorem}[\cite{Bou86}]\label{T:Bourgain} A Banach space $X$ is
nonsuperreflexive if and only if it admits bilipschitz embeddings
with uniformly bounded distortions of finite binary trees of all
depths.
\end{theorem}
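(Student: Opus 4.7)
The plan is to prove both implications.

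For the converse $(\Leftarrow)$, that uniform bilipschitz embeddability of all $T_n$'s into $X$ forces $X$ to be nonsuperreflexive, argue by contrapositive. Suppose $X$ is superreflexive. By the Enflo--Pisier renorming theorem $X$ can be given an equivalent norm whose modulus of convexity has power-type, i.e.\ $\delta_X(\varepsilon)\geq c\varepsilon^p$ for some $p<\infty$. Power-type convexity yields a quantitative midpoint inequality for any triple $a,b,c\in X$ that bounds the distance from $c$ to the midpoint of $a$ and $b$ in terms of $\|a-b\|$. Iterating this inequality down the levels of $T_n$, identifying each internal vertex with the metric midpoint of its children in the image, shows that any bilipschitz $f:T_n\to X$ must contract some pair of vertices by a factor growing like $(\log n)^{1/p}$, so the distortion of $T_n$ into $X$ is unbounded as $n\to\infty$.

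For the forward direction $(\Rightarrow)$, that nonsuperreflexivity of $X$ yields uniform bilipschitz embeddings of all $T_n$'s, invoke James's characterization of nonsuperreflexivity: for every $\theta\in(0,1)$ and every $N$ there exist $x_1,\dots,x_N\in B_X$ with
\[
\dist\bigl(\conv\{x_1,\dots,x_k\},\,\conv\{x_{k+1},\dots,x_N\}\bigr)\geq\theta\qquad(1\leq k<N).
\]
Take $N=2^n$, index the leaves of $T_n$ left-to-right by $\{1,\dots,N\}$ so that each internal vertex $v$ corresponds to a dyadic interval $I_v\subset\{1,\dots,N\}$, and build $f:T_n\to X$ by accumulating along each root-to-$v$ path a scaled and signed combination of the $x_i$'s indexed by the intervals $I_u$ corresponding to the proper ancestors $u$ of $v$. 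The James separation property gives the uniform upper bound on parent--child jumps; for vertices $u,v$ at large tree distance with least common ancestor $w$, James's inequality applied to appropriate prefix and suffix convex combinations ensures that the contributions along the two branches emerging at $w$ do not cancel, yielding a lower bound of the form $\|f(u)-f(v)\|\geq c(\theta)\cdot d_{T_n}(u,v)$.

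The delicate technical step is the co-Lipschitz bound in $(\Rightarrow)$. A naive averaging---e.g.\ setting $f(v)$ equal to the mean of the $x_i$ with $i\in I_v$---gives only $\|f(u)-f(v)\|\geq\theta$ regardless of $d_{T_n}(u,v)$, which yields distortion growing linearly in $n$. The sign pattern and scalings of the edge contributions must be chosen so that they telescope in an ``$\ell_1$-like'' manner along any root-to-leaf path, so that James separation at every dyadic scale enters additively rather than producing a single fixed gap. This is the heart of Bourgain's construction; it is what forces one to use the full strength of James's inequality at every dyadic refinement, not just at a single split.
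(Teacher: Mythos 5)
This theorem is quoted in the paper from \cite{Bou86} and is not proved there (the paper only records, in Section~2, that Bourgain's construction yields a $B$-bilipschitz copy of $T_k$ in any nonsuperreflexive space, pointing to \cite[Section 9.1]{Pis11}), so your sketch can only be measured against Bourgain's original argument. In outline it does follow it: James's finite separation condition drives the embedding, and renorming plus uniform convexity drives the obstruction. But note that your emphasis is inverted relative to the paper's own remark: the paper states that the difficult direction is the ``if'' direction (uniform embeddability of the $T_n$ forces nonsuperreflexivity, the direction whose short proof by Kloeckner \cite{Klo13+} the paper highlights), while the construction from James's condition is the routine part. Accordingly, the direction you dispatch in three sentences is the one where your sketch has a real gap.

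Concretely: in your $(\Leftarrow)$ argument, iterating a midpoint inequality level by level, with each internal vertex viewed as a midpoint of its two children, does not produce a $(\log n)^{1/p}$ bound. One application of power-type convexity to a parent and its two children yields a fixed gain $\delta_X(\cdot)$ that does not compound from one level to the next; every known proof instead applies the convexity inequality to \emph{forks} at all dyadic scales $2^j$, $j\le\log_2 n$ (a vertex together with two incomparable descendants at tree-distance $2^j$ from it), and then sums the resulting $\log_2 n$ inequalities to force some pair to be contracted by a factor $(\log n)^{1/p}$ --- this summation over scales (or an equivalent self-improvement argument) is precisely the nontrivial step, and it is absent from your sketch. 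In the $(\Rightarrow)$ direction your diagnosis of the failure of naive averaging is correct and your fix is essentially Bourgain's map $f(v)=\sum_{j\le|v|}\frac{1}{|I_{v_j}|}\sum_{i\in I_{v_j}}x_i$, summed over the ancestors $v_j$ of $v$; no signs are needed. Two details you should still supply there: the ancestor--descendant pairs (where there are no ``two branches emerging at $w$'' and one must use the separating functionals that Hahn--Banach extracts from the convex-hull separation), and the pairs of incomparable vertices at different depths, where the two sums have unequal numbers of terms.
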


In Bourgain's proof the  difficult direction is the ``if''
direction, the ``only if'' is an easy consequence of the theory of
superreflexive spaces. Recently Kloeckner \cite{Klo13+} found a
simple proof of the ``if'' direction.\medskip

We are going to use the following terminology:

\begin{definition}\label{D:TestSp} Let $\mathcal{P}$ be a class of Banach
spaces and let $T=\{T_\alpha\}_{\alpha\in A}$ be a set of metric
spaces. We say that $T$ is a set of {\it test-spaces} for
$\mathcal{P}$ if the following two conditions are equivalent:
\begin{itemize}

\item[{\bf (1)}] $X\notin\mathcal{P}$;

\item[{\bf (2)}] The spaces $\{T_\alpha\}_{\alpha\in A}$ admit
bilipschitz embeddings into $X$ with uniformly boun\-ded
distortions.
\end{itemize}
\end{definition}

\subsection{Versions of Bourgain's characterization with one test-space}

Baudier \cite{Bau07} streng\-thened the ``only if'' part of the
Bourgain theorem and got the following result. Denote by
$T_\infty$ an infinite graph in which each vertex is represented
by a finite (possibly empty) sequence of $0$s and $1$s. Two
vertices are adjacent if the sequence corresponding to one of them
is obtained from the sequence corresponding to the other by adding
one term on the right. We consider $T_\infty$ as a set of vertices
endowed with the shortest path distance.

\begin{theorem}[\cite{Bau07}]\label{T:Baudier} A Banach space $X$ is
nonsuperreflexive if and only if it admits bilipschitz embedding
of $T_\infty$.
\end{theorem}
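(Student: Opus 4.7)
The ``if'' direction is immediate: for every $n \ge 1$ the tree $T_n$ sits isometrically inside $T_\infty$ as the subset of vertices whose binary string has length at most $n$. Hence a bilipschitz embedding $T_\infty \to X$ with distortion $D$ restricts to a bilipschitz embedding of each $T_n$ into $X$ with distortion at most $D$, and Theorem~\ref{T:Bourgain} then yields that $X$ is nonsuperreflexive.

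For the ``only if'' direction, assume $X$ is nonsuperreflexive. Theorem~\ref{T:Bourgain} supplies a constant $D \ge 1$ and bilipschitz embeddings $f_n : T_n \to X$ of distortion at most $D$ for every $n$. Since every finite subset of $T_\infty$ is contained in some $T_n$, every finite subset of $T_\infty$ admits a bilipschitz embedding into $X$ with distortion at most $D$. What remains is to combine these finite-scale embeddings into a single bilipschitz embedding of all of $T_\infty$.

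My preferred route is to invoke a general \emph{stability principle}: if $M$ is a countable locally finite metric space and every finite subset of $M$ admits a bilipschitz embedding of distortion at most $D$ into a Banach space $X$, then $M$ itself admits a bilipschitz embedding of distortion at most $D$ (or at most $D(1+\epsilon)$ for any prescribed $\epsilon>0$) into $X$. The idea is to embed $M$ into an ultrapower $X^{\mathcal{U}}$ via the ultraproduct of the given finite embeddings, and then, using that $M$ is countable and locally finite, perform an iterative extraction/perturbation argument (enumerating the points of $M$ and refining the map on growing finite pieces) to realize the embedding inside $X$ itself. Applied to $M = T_\infty$, this produces the desired embedding.

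The principal obstacle is justifying the stability principle for a general Banach space $X$: the map $M \to X^{\mathcal{U}}$ is straightforward to build, but it does not a priori factor through $X$, so the extraction argument has to produce coherent choices in $X$ simultaneously for infinitely many constraints. An alternative concrete route is a \emph{patching construction}: decompose $T_\infty$ into generations, where generation $k$ consists, for each vertex $u$ at depth $2^{k-1}-1$, of the piece of $T_\infty$ formed by $u$ and its descendants of relative depth at most $2^{k-1}$ (a copy of $T_{2^{k-1}}$), and define $F:T_\infty\to X$ inductively by $F(v)=F(u)+g_k(v)$ along each piece, where $g_k$ is a translated, renormalized copy of $f_{2^{k-1}}$. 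The upper Lipschitz bound is immediate from the triangle inequality, but the lower bound is the main technical difficulty: for vertices lying in distinct pieces (possibly at very different scales), displacements contributed by different generations could cancel destructively, and one must choose the $g_k$ --- exploiting the matching of diameters $2^{k-1}$ across generations and, if needed, directional independence in $X$ --- to ensure that the contributions telescope rather than cancel. Either route ultimately converts the uniform family of finite embeddings supplied by Theorem~\ref{T:Bourgain} into one coherent infinite embedding.
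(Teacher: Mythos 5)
Your argument is essentially the paper's: the ``stability principle'' you want to invoke is precisely Theorem~\ref{T:FinDet} (embeddability of a locally finite metric space is finitely determined, proved in \cite{Ost12}), and the paper obtains Theorem~\ref{T:Baudier} exactly by combining that theorem with Theorem~\ref{T:Bourgain}, the ``if'' direction coming from the isometric copies of $T_n$ inside $T_\infty$ just as you say. Consequently your ``principal obstacle'' is not an obstacle here: the stability principle is available as a stated, cited result, so neither the ultrapower extraction nor the patching construction needs to be carried out.
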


\begin{remark} There exists a much simpler result stating that there exists a one-test-space characterization
superreflexivity, and even that the test-space can be chosen to be
a tree, see Section \ref{S:OneSpace}. An important feature of
Theorem \ref{T:Baudier} is that the test-space can be chosen to be
$T_\infty$.
\end{remark}

A more general than Theorem \ref{T:Baudier} result was proved in
\cite{Ost12} (recall that a metric space is called {\it locally
finite} if all balls of finite radius in it have finite
cardinalities):

\begin{theorem}[\cite{Ost12}, also
{\cite[Sections 2.1 and 2.3]{Ost13}}]\label{T:FinDet} Let $A$ be a
locally finite metric space whose finite subsets admit uniformly
bilipschitz embeddings into a Banach space $X$. Then $A$ admits a
bilipschitz embedding into $X$.
\end{theorem}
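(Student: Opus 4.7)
My plan is as follows.

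First, I will set up a nested exhaustion of $A$ by finite sets. Since $A$ is locally finite it is countable; fix a basepoint $x_0 \in A$ and write $F_n := \{a \in A : d(a, x_0) \leq n\}$, so that $F_1 \subseteq F_2 \subseteq \cdots$ exhausts $A$ by finite sets. By hypothesis, each $F_n$ admits an embedding $g_n : F_n \to X$ of distortion at most some uniform constant $D$. After post-composing with a translation and rescaling, I may assume $g_n(x_0) = 0$ and $\|g_n\|_{\lip} = 1$, so that
\[
\frac{d(x,y)}{D} \;\leq\; \|g_n(x) - g_n(y)\|_X \;\leq\; d(x,y) \qquad \text{for all } x,y \in F_n,
\]
and in particular $\|g_n(x)\|_X \leq d(x, x_0)$ for every $x \in F_n$.

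Second, I will extract a ``limit'' embedding via an ultrafilter. Fix a non-principal ultrafilter $\mathcal{U}$ on $\mathbb{N}$ and form the Banach-space ultrapower $X_\mathcal{U}$. Define $\tilde f : A \to X_\mathcal{U}$ by $\tilde f(x) := [g_n(x)]_\mathcal{U}$, setting $g_n(x) := 0$ arbitrarily for the finitely many indices $n$ with $x \notin F_n$. Since every pair $x,y \in A$ eventually lies in $F_n$, the ultralimit formula
\[
\|\tilde f(x) - \tilde f(y)\|_{X_\mathcal{U}} \;=\; \lim_{n,\mathcal{U}} \|g_n(x) - g_n(y)\|_X \;\in\; \left[\tfrac{d(x,y)}{D},\, d(x,y)\right]
\]
shows at once that $\tilde f$ is bilipschitz into $X_\mathcal{U}$ with distortion at most $D$.

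Third, and this will be the crux, I need to descend from $X_\mathcal{U}$ to $X$ itself. The ultrapower $X_\mathcal{U}$ is finitely representable in $X$, so for each finite $F \subseteq A$ and each $\varepsilon > 0$ the finite-dimensional subspace $\operatorname{span}(\tilde f(F)) \subseteq X_\mathcal{U}$ admits a $(1+\varepsilon)$-isomorphism into $X$, producing an embedding of $F$ into $X$ with distortion at most $D(1+\varepsilon)$. The plan is to stitch these finite realizations into a single global embedding $f : A \to X$ by an inductive ``shell-by-shell'' construction: at step $n$, extend the previously chosen $f|_{F_{n-1}}$ to $F_n$ by placing the finitely many new points of $F_n \setminus F_{n-1}$ inside a fresh near-isometric copy of an appropriate finite-dimensional subspace of $X_\mathcal{U}$ inside $X$, while perturbing the already-assigned images by at most $\varepsilon_n$.

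The hard part will be controlling the accumulation of these perturbation errors across infinitely many extension steps, so that the resulting $f : A \to X$ is globally bilipschitz and not merely a uniform embedding. This is precisely where local finiteness of $A$ is essential: each shell $F_n \setminus F_{n-1}$ contributes only \emph{finitely many} new pairwise-distance constraints, so with a summable tolerance sequence $(\varepsilon_n)$ I can arrange that all previously realized distances remain within a uniform multiplicative window $[1/(cD),\, cD]$ of the true $d$-distance. Assembling these approximations carefully will yield a bilipschitz embedding $f : A \to X$ with distortion bounded by a constant multiple of $D$, completing the proof.
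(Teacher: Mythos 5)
The paper itself contains no proof of this theorem; it is quoted from \cite{Ost12} and \cite[Sections 2.1 and 2.3]{Ost13}, so your attempt must be measured against the argument there. Your first two steps are correct and in fact reproduce the opening moves of that argument: the exhaustion by balls, the normalization, and the ultrafilter limit do give a distortion-$D$ embedding $\tilde f:A\to X_{\mathcal U}$, and finite representability of $X_{\mathcal U}$ in $X$ is indeed the intended bridge back to $X$ (the main technical statement of \cite{Ost12} is precisely that a locally finite subset of a space finitely representable in $X$ embeds bilipschitzly into $X$).

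The gap is in your third step, which is where the entire content of the theorem lives. Finite representability hands you, for each $n$, \emph{some} $(1+\varepsilon)$-isomorphic copy of ${\rm span}(\tilde f(F_n))$ inside $X$, but with no compatibility whatsoever with the copy chosen at stage $n-1$: two $(1+\varepsilon)$-copies of the same finite-dimensional space inside $X$ can sit in completely unrelated positions, so there is no way to ``perturb the already-assigned images by at most $\varepsilon_n$'' so that the stage-$n$ realization extends the stage-$(n-1)$ one. The obstruction is not the accumulation of summable errors, as you diagnose; it is that there is no small error to begin with. The proof in \cite{Ost12} does not try to make successive finite embeddings agree. It takes balls of geometrically growing radii, maps each annulus by its own finite embedding into a finite-dimensional subspace chosen inside the almost-annihilator $(M)_{\top}$ of a $\lambda$-norming set $M$ for the span of all previously used subspaces (the same device used in Section \ref{S:hyperb} of this paper to embed $\ell_1$-products of trees), and keeps the radial coordinate $d(x,x_0)$ recoverable from the image. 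Pairs of points in the same or in adjacent annuli are then controlled by a single finite embedding, pairs in far-apart annuli are automatically far apart because their images have very different norms, and the norming functionals prevent cancellation between the pieces. Some argument of this kind is what your step three still needs before it is a proof.
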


\begin{remark} In \cite[Definition 2.1]{Ost13} locally finite
spaces are required to be uniformly discrete, but this property is
not used in the proof of Theorem \ref{T:FinDet}.
\end{remark}

Theorem \ref{T:FinDet} is more general than Theorem
\ref{T:Baudier} in the sense that combining Theorem \ref{T:FinDet}
with Theorem \ref{T:Bourgain} we immediately get Theorem
\ref{T:Baudier}. The first goal of this paper is to show that any
infinite finitely generated word hyperbolic group (word hyperbolic
in the sense of Gromov \cite{Gro87}) which does not contain
$\mathbb{Z}$ as a finite index subgroup, is also a test-space for
superreflexivity:

\begin{theorem}\label{T:hyperb} {\rm (a)} Let $G$ be a finitely generated word hyperbolic
group. Then the Cayley graph of $G$ admits a bilipschitz embedding
into an arbitrary nonsuperreflexive Banach space.\medskip

{\rm (b)} Let $G$ be an infinite finitely generated word
hyperbolic group which does not have a finite index subgroup
isomorphic to $\mathbb{Z}$. If $G$ admits a bilipschitz embedding
into a Banach space $X$, then $X$ is nonsuperreflexive.
\end{theorem}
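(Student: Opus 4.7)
Since the Cayley graph of a finitely generated group is locally finite, Theorem~\ref{T:FinDet} reduces the task to showing that finite subsets $F \subset \cay(G)$ admit bilipschitz embeddings into $X$ with distortion bounded independently of $F$ and $|F|$. Fix a hyperbolicity constant $\delta$ for $\cay(G)$. Gromov's tree approximation for $\delta$-hyperbolic spaces produces, for each finite $F$ with $|F|=n$, a finite metric tree $T_F$ and a map $\phi_F\colon F \to T_F$ whose distances differ from those of $F$ by an additive error of at most $2\delta\lceil\log_2(n-1)\rceil$. Bourgain's Theorem~\ref{T:Bourgain} then provides a bilipschitz embedding $T_F \hookrightarrow X$ with distortion bounded by a constant $D=D(X)$ independent of $|T_F|$.

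The principal obstacle is that Gromov's additive error does not, by itself, yield a bounded multiplicative distortion for pairs of points in $F$ that are close in $\cay(G)$. To overcome this I would work at a geometric sequence of scales $\{2^{k}\}_{k\ge 0}$: at scale $2^k$ restrict attention to a maximal $2^k$-separated subset of $F$ (where the relative additive error becomes small), apply Gromov's approximation, and push it into $X$ by Bourgain's embedding. Assembling these scale-specific maps into a single Banach-space-valued map, by summing into ``independent'' copies of tree embeddings inside $X$ (whose existence is built into the James-sequence construction underlying Bourgain's theorem) and weighting them geometrically, should produce a bilipschitz embedding of $F$ into $X$. The decisive quantitative step, and where I expect the main difficulty to lie, is verifying that the distortion of the assembled map is uniform in $F$.

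\textbf{Plan for (b).} Under the hypotheses, $G$ is non-elementary, since the elementary word hyperbolic groups are exactly the finite and the virtually infinite cyclic ones. A standard ping-pong argument, applied to sufficiently high powers of two hyperbolic elements of $G$ with pairwise disjoint pairs of limit points on the Gromov boundary $\partial G$, produces a rank-$2$ free subgroup $H \cong F_{2}$ that is quasiconvex in $G$. Quasiconvex subgroups of word hyperbolic groups are undistorted, so the inclusion of the Cayley graph of $H$ (with its own word metric) into $\cay(G)$ is a quasi-isometric embedding; on integer-valued discrete metric spaces a quasi-isometric embedding automatically upgrades to a bilipschitz embedding, since the additive defect becomes a multiplicative one once distances are bounded below by $1$. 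The Cayley graph of $F_{2}$ is a $4$-regular tree and contains the infinite binary tree $T_\infty$ as an isometric subspace. Composing these inclusions with the assumed bilipschitz embedding $\cay(G)\hookrightarrow X$ yields a bilipschitz embedding $T_\infty \hookrightarrow X$, so Baudier's Theorem~\ref{T:Baudier} forces $X$ to be nonsuperreflexive.
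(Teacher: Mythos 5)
Part (a) of your plan has a genuine gap at its core. Gromov's tree approximation of an $n$-point subset of a $\delta$-hyperbolic space carries an additive error of order $\delta\log_2 n$, and for finite subsets of a non-elementary hyperbolic group this error is comparable to the \emph{diameter} of the subset, not to the scale you are trying to resolve: a maximal $2^k$-separated net $N_k$ of a ball of radius $R$ still has cardinality exponential in $R$ (infinite hyperbolic groups that are not virtually cyclic have exponential growth), so $\log_2|N_k|\asymp R\gg 2^k$ for every scale below the top one. Hence the ``relative additive error'' at scale $2^k$ is not small --- it swamps precisely the distances you restricted to that scale --- and the assembled map controls nothing but the top scale. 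This is not a repairable technicality: a cocompact lattice in the hyperbolic plane admits no quasi-isometric embedding into any tree (its boundary is a circle), and the logarithmic loss in Gromov's lemma is known to be sharp for nets in $\mathbb{H}^2$. The paper replaces your single-tree approximations by the Buyalo--Dranishnikov--Schroeder theorem (Theorem~\ref{T:BDS07}): every hyperbolic group embeds quasi-isometrically into an $\ell_1$-product of \emph{finitely many} binary trees. The remaining work is then (i) embedding such a finite $\ell_1$-product into an arbitrary nonsuperreflexive $X$, done by placing Bourgain images of the factors into finite-dimensional subspaces separated by norming functionals so that their sum is $\ell_1$-like, combined with Theorem~\ref{T:FinDet}; and (ii) upgrading the resulting quasi-isometric embedding of $G$ to a bilipschitz one (Lemma~\ref{L:QuasiBilip}, which perturbs the map onto a $1$-net of $X$ using infinite-dimensionality). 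Your ``independent copies built into the James-sequence construction'' gestures at (i), but the product of finitely many trees --- not a geometrically weighted sum of single-tree approximations over scales --- is the indispensable input you are missing.

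Part (b) is essentially the paper's argument: a quasiconvex free subgroup of rank $2$ in $G$ (the paper quotes Arzhantseva and Dahmani--Guirardel--Osin together with Sisto where you invoke classical ping-pong; either route is acceptable), whose Cayley graph is a $4$-regular tree containing the binary trees, followed by Bourgain's theorem. One caution: your blanket claim that a quasi-isometric embedding between integer-valued discrete metric spaces ``automatically upgrades'' to a bilipschitz one is false in general --- the lower bound $a_1d-b$ is vacuous for $d\le b/a_1$, and such a map need not even be injective; this is exactly why the paper proves Lemma~\ref{L:QuasiBilip}. In your specific situation the upgrade does go through, because the map in question is an honest inclusion composed with a bilipschitz embedding, so distinct points have images separated by a fixed positive constant; you should say that explicitly rather than appeal to a false general principle.
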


In (b) we need to exclude groups which contain $\mathbb{Z}$ as a
finite index subgroup because Cayley graphs of such groups admit
bilipschitz embeddings into $\mathbb{R}$, and hence into any
Banach space of dimension at least $1$ (this fact is well known,
because we do not know a suitable reference we prove it in
Proposition \ref{P:ZFiniteInd} for completeness).

We prove Theorem \ref{T:hyperb} in Section \ref{S:hyperb}.

\subsection{Characterizations of superreflexivity using different
sequences of finite graphs}\label{S:IntroFin}

Johnson and Schechtman \cite{JS09} proved that binary trees in
Theorem \ref{T:Bourgain} may be replaced by some other sequences
of finite graphs, for example, by so-called diamond graphs
introduced in (the conference version of) \cite{GNRS04}. Diamond
graphs can be defined as follows: The {\it diamond graph} of level
$0$ is denoted $D_0$. It has two vertices joined by an edge. $D_i$
is obtained from $D_{i-1}$ as follows. Given an edge $uv\in
E(D_{i-1})$, it is replaced by a quadrilateral $u, a, v, b$. We
endow vertex sets of $D_n$ with their shortest path metrics (each
edge is assumed to have length $1$). In this context we consider
diamonds as finite metric spaces.

\begin{theorem}[\cite{JS09}]\label{T:JS}
A Banach space $X$ is nonsuperreflexive if and only if it admits
bilipschitz embeddings with uniformly bounded distortions of
diamonds $\{D_n\}_{n=1}^\infty$ of all sizes.
\end{theorem}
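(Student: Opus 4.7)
The plan is to prove both directions. For the ``only if'' direction (nonsuperreflexive implies uniform embeddings of all $D_n$), I would follow the constructive paradigm of Bourgain's proof of Theorem \ref{T:Bourgain}, adapted to the recursive structure of the diamonds. Invoking James's finite-tree characterization of nonsuperreflexivity, for each $\varepsilon>0$ and each $n$ I would produce a sequence $(x_i)_{i=1}^{2^n}\subset B_X$ with the ``split-sum'' property: for every dyadic sub-interval $I\subset [1,2^n]$ with left half $I^L$ and right half $I^R$, $\bigl\|\sum_{i\in I^L}x_i-\sum_{i\in I^R}x_i\bigr\|\ge(1-\varepsilon)|I|$. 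I would then define $f:D_n\to X$ recursively: the two corners of $D_n$ map to $0$ and $\sum_{i=1}^{2^n}x_i$, and whenever an edge $uv$ of $D_k$ is refined to a 4-cycle $u,a,v,b$ of $D_{k+1}$ corresponding to a dyadic interval $I$, the vertices $a,b$ are placed symmetrically so that $f(a)-f(b)=\sum_{i\in I^L}x_i-\sum_{i\in I^R}x_i$. The triangle inequality gives the upper Lipschitz constant; the James split-sum estimate gives the lower one, uniformly in $n$.

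For the ``if'' direction, I would argue contrapositively: if $X$ is superreflexive, then by Pisier's renorming theorem it admits an equivalent uniformly convex norm whose modulus of convexity has power type $q$ for some $q\ge 2$. The key geometric input is a ``fork inequality'': whenever $u,a,v,b\in X$ satisfy $\|u-a\|,\|a-v\|,\|u-b\|,\|b-v\|\le R$ together with $\|u-v\|\ge(1-\eta)\cdot 2R$, uniform convexity forces $\|a-b\|\le CR\,\eta^{1/q}$. Applied to each 4-cycle produced in refining an edge $uv\in E(D_k)$ into $u,a,v,b\in D_{k+1}$, this yields at each level a quantitative contraction of an appropriately chosen $\ell_q$-energy $E_k=\sum_{uv\in E(D_k)}\|f(u)-f(v)\|^q$. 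Telescoping over $k=0,\ldots,n-1$ and comparing with the lower edge length bound coming from bilipschitzness produces a distortion lower bound growing at least like $n^{1/q}$, contradicting any uniformly bilipschitz embedding of the family $\{D_n\}$.

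The principal obstacle lies in the ``if'' direction. Under an arbitrary bilipschitz embedding with distortion $D$, the fork slack $\eta$ at an individual 4-cycle is as large as $1-1/D$, which is bounded away from zero; the fork inequality therefore cannot be applied with a small parameter at any single level, and the distortion bound must be extracted from the cumulative effect of the losses, summed (or averaged in the $\ell_q$ sense) over all $n$ recursion levels and all $4^k$ edges at level $k$. The correct choice of energy functional and careful execution of the telescoping---possibly routed through a Markov-type $q$ inequality for uniformly convex spaces applied to a natural random walk on $D_n$---is the heart of the argument.
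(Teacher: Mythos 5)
The paper does not prove Theorem~\ref{T:JS}; it quotes it from \cite{JS09}, so your proposal can only be measured against the Johnson--Schechtman argument itself. Your strategy matches theirs in both directions: James-type sequences placed recursively on the subdiamonds for the ``only if'' part, and Pisier's power-type-$q$ renorming plus a level-by-level accumulation of convexity losses, giving distortion at least of order $n^{1/q}$, for the ``if'' part. Two points need attention. First, in the easy direction the split-sum property over dyadic intervals only controls the pairs $a,b$ sitting over a common parent edge; to bound the inverse Lipschitz constant on \emph{arbitrary} pairs of vertices one needs more, and \cite{JS09} gets it from James's quantitative criterion in the form that also supplies norming functionals $x_j^*$ with $x_j^*(x_i)=\theta$ for $j\le i$ and $x_j^*(x_i)=0$ for $j>i$: testing $f(u)-f(v)$ against a suitable $x_j^*$ then handles every pair at once.

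Second, the step you leave open as ``the heart of the argument'' does close, essentially as you propose, but the correct input is neither the $\eta^{1/q}$ fork estimate (which, as you rightly observe, is vacuous at slack $\eta=1-1/D$) nor Markov type (which constrains Lipschitz maps \emph{out of} a space, not into it). It is the two-point inequality characterizing modulus of convexity of power type $q$:
\[
\left\|\frac{x+y}{2}-z\right\|^q+\frac{1}{K^q}\left\|\frac{x-y}{2}\right\|^q\le\frac{\|x-z\|^q+\|y-z\|^q}{2},
\]
which requires no smallness of any slack. For each edge $uv\in E(D_k)$ refined to $u,a,v,b$, apply it with $x=f(a)$, $y=f(b)$ and with $z=f(u)$, then $z=f(v)$; add the two inequalities, use $s^q+t^q\ge 2^{1-q}(s+t)^q$ together with $s+t\ge\|f(u)-f(v)\|$, divide by $\ell_k^q$ (where $\ell_k=2^{-k}\diam D_n$ is the level-$k$ edge length) and average over the $4^k$ edges of $D_k$. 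Writing $\Phi_k$ for the average of $(\|f(u)-f(v)\|/\ell_k)^q$ over $E(D_k)$, the factor $2^{q-2}$ produced on the right-hand side is exactly cancelled by the renormalization $4\cdot(\ell_{k+1}/\ell_k)^q=2^{2-q}$, yielding $\Phi_{k+1}\ge\Phi_k+K^{-q}$, because $d(a,b)=\ell_k$ (every $a$--$b$ path passes through $u$ or $v$) forces $\|f(a)-f(b)\|\ge\ell_k$ for a noncontractive embedding. Telescoping gives $D^q\ge\Phi_n\ge\Phi_0+nK^{-q}$ and hence $D\ge n^{1/q}/K$. With these two repairs your outline is a correct proof.
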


Our next purpose it to show that this characterization of Johnson
and Schechtman is independent of the Theorem \ref{T:Bourgain} in
the sense that Theorem \ref{T:NETreeInDiam} and the statement
below it hold.\medskip

Let $\{M_n\}_{n=1}^\infty$ and $\{R_n\}_{n=1}^\infty$ be two
sequences of metric spaces. We say that $\{M_n\}_{n=1}^\infty$
admits {\it uniformly bilipschitz embeddings} into
$\{R_n\}_{n=1}^\infty$ if for each $n\in\mathbb{N}$ there is
$m(n)\in\mathbb{N}$ and a bilipschitz map $f_n:M_n\to R_{m(n)}$
such that the distortions of $\{f_n\}$ are uniformly bounded.

\begin{theorem}\label{T:NETreeInDiam}  Binary trees $\{T_n\}_{n=1}^\infty$ do not admit
uniformly bilipschitz embeddings into diamonds
$\{D_n\}_{n=1}^\infty$.
\end{theorem}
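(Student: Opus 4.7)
My approach is to identify a bilipschitz quasi-invariant $\phi$ of metric spaces that diverges on $\{T_n\}$ as $n\to\infty$ while remaining uniformly bounded on $\{D_m\}$. Any such invariant rules out uniformly bilipschitz embeddings of the trees into the diamonds: a distortion-$D$ embedding $T_n \hookrightarrow D_{m(n)}$ would yield $\phi(T_n) \leq g(D)\,\phi(D_{m(n)})$, contradicting the divergence of $\phi(T_n)$.

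The first candidate I would try is the Markov $p$-convexity invariant of Lee, Naor, and Peres. Binary trees $T_n$ are known to fail Markov $p$-convexity for every finite $p$, with the relevant constants diverging in $n$. The task would then reduce to establishing that the diamonds $D_m$ are uniformly Markov $p$-convex for some fixed $p\geq 2$. The natural route is induction on $m$, exploiting the recursive series-parallel construction of $D_{m+1}$ from $D_m$: the Markov-convexity sum on $D_{m+1}$ should decompose into a contribution from $D_m$ (inductively controlled) plus local contributions from the $4$-cycle replacements of each edge.

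The main obstacle is that $\{D_m\}$ embeds uniformly bilipschitz into $L_1$ by Gupta--Newman--Rabinovich--Sinclair, while $L_1$ itself is not Markov convex; hence any Markov convexity of the diamonds must come intrinsically from their finite recursive structure, not from any ambient Banach space, and the inductive bookkeeping must be done with care. If the Markov convexity route proves intractable, I would fall back on a purely combinatorial invariant---for example, the depth of a maximal sequence of nested ``geometric tripods'' (triples of points equidistant from a common center, with pairwise distance close to twice that common distance) whose arm lengths shrink by a fixed geometric ratio. Binary trees of depth $n$ support $\Theta(n)$ such nesting levels, and I would aim to show, via the series-parallel decomposition of $D_m$, that diamonds support only a slowly growing number of such levels relative to $m$---with the gap sufficient to survive bilipschitz distortion.
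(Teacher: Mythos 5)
Your overall strategy---exhibit a bilipschitz quasi-invariant that stays bounded on the diamonds but diverges on the binary trees---is sound, and the tree side of the Markov-convexity route is indeed available: Lee, Naor, and Peres showed that the Markov $p$-convexity constants $\Pi_p(T_n)$ tend to infinity with $n$ for every fixed $p<\infty$. The genuine gap is on the diamond side: the claim that $\sup_m \Pi_p(D_m)<\infty$ for some fixed finite $p$ is the entire content of your argument, and you neither prove it nor cite a proof of it. This is not a routine induction. Your own caveat already points at why: since $\{D_m\}$ embeds uniformly into $L_1$ \cite{GNRS04} and, by \cite{JS09}, into every nonsuperreflexive space, no ambient-space argument can supply the convexity, and the proposed decomposition of the Markov-convexity sum on $D_{m+1}$ into ``the $D_m$ contribution plus local $4$-cycle corrections'' does not obviously close up, because the defining quantity of Markov $p$-convexity sums over all time scales of the Markov chain simultaneously, so the new midpoint vertices interact with long-range structure rather than only with their own $4$-cycle. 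The only case where such a uniform bound has actually been carried out, the Laakso graphs in \cite[Theorem 1.5]{MN13} (where the sharp exponent is $p=4$ and $p=2$ fails), required a long and delicate argument; you would need to prove, or correctly cite, a diamond analogue of that theorem, which is substantially harder than the statement you are trying to establish. The fallback ``nested geometric tripods'' invariant is even less developed: no bound on the number of nesting levels supported by $D_m$ is offered, and since $D_m$ contains deeply nested subdiamonds whose vertices come close to realizing tripod-like configurations, it is not clear that this count grows slowly.

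For comparison, the paper avoids invariants entirely and argues combinatorially: every subdiamond of $D_{m(n)}$ has exactly two ``exit'' vertices whose removal separates it from the rest of the graph, so among any three root-to-leaf paths of a subtree whose images start deep inside a subdiamond $M$, two must leave $M$ through the same exit; this forces two tree vertices at tree-distance at least $2^{k+1}+2$ to have images at distance at most $2^k\cdot 2^{p(n)}$, contradicting the assumed lower Lipschitz bound. Combined with the counting estimate that a subdiamond of height $2^h$ contains at most $2\cdot 4^{h-p(n)}$ points of a $2^{p(n)}$-separated set, this traps more than half of a sufficiently deep subtree inside a subdiamond too small to contain its image. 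That argument is self-contained and elementary, whereas your route, even if completed, would rest on heavy machinery.
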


The fact that diamonds $\{D_n\}$ do not admit uniformly
bilipschitz embeddings into binary trees $\{T_n\}$ is well known,
it follows immediately from the result of Rabinovich and Raz
\cite[Corollary 5.3]{RR98} stating that the distortion of any
embedding of an $n$-cycle into any tree is $\ge\frac{n}3-1$, and
the fact that $D_n$ $(n\ge 1)$ contains a cycle of length $4^n$
isometrically.

We prove Theorem \ref{T:NETreeInDiam} in Section
\ref{S:NETreeInDiam}.

\begin{remark}\label{R:Laakso} In \cite{JS09} it was proved that a
characterization similar to Theorem \ref{T:JS} can be proved for
the sequence of Laakso graphs. An analogue of Theorem
\ref{T:NETreeInDiam} for Laakso graphs is easy because Laakso
graphs are doubling and binary trees are not, see \cite[Theorem
1.5]{MN13} for an interesting related result.
\end{remark}

Our next result is related to the following remark of Johnson and
Schechtman \cite[Remark 6, p.~188]{JS09}: ``In light of
\cite{BKL07}, it might very well be that Theorem \ref{T:JS}
extends to any series-parallel graph.'' We show that at least in
one direction this is not true, namely, we prove Theorem
\ref{T:SerParL2}. We use the following equivalent definition of
series-parallel graphs (see \cite[p.~243]{GNRS04}).

\begin{definition}\label{D:SerPar} A weighted graph is called {\it
series-parallel} if it can be obtained in the following way:

\begin{itemize}

\item We start with an edge.

\item In each step we add a new vertex and attach it to end
vertices of an already existing edge.

\item At the end of the construction we remove an arbitrary set of
edges.

\end{itemize}

\end{definition}

Series-parallel graphs can be also defined as $K_4$-excluded
graphs, see \cite[Theorem 31.3.7]{DL97}. Another equivalent
definition of series-parallel graphs is in terms of so-called
series and parallel compositions, see \cite{Epp92}; this
definition explains the name.

\begin{theorem}\label{T:SerParL2} There exists a
sequence of series-parallel graphs $\{W_n\}_{n=1}^\infty$ which
satisfies the following conditions.

\begin{enumerate}

\item The underlying unweighted graphs contain
$\{D_n\}_{n=1}^\infty$ as subgraphs.

\item $\{W_n\}_{n=1}^\infty$ admits uniformly bilipschitz
embeddings into $\ell_2$.

\end{enumerate}
\end{theorem}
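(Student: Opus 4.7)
The plan is to start from $D_n$ and add, in a series--parallel-preserving way, a weighted ``shortcut'' edge between the poles of every recursive sub-diamond; with a geometrically decaying choice of weights, the resulting shortest-path metric on $W_n$ collapses the ``bad'' $4$-cycle structure of $D_n$ enough to admit a uniformly bilipschitz embedding into $\ell_2$ via a recursive orthogonal construction.

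Construction of $W_n$: recall that $D_n$ is the parallel composition of two length-$2$ series compositions of copies of $D_{n-1}$. For every $k\in\{0,1,\dots,n-1\}$ and every copy of $D_{n-k}$ appearing as a recursive sub-block of $D_n$, add an edge in parallel with its two pole-to-pole paths, with weight $w_{n-k}:=\alpha^{n-k}$ for a fixed parameter $\alpha\in(0,1)$ to be tuned; every original edge of $D_n$ is kept with weight $1$. Since parallel composition with a single edge is a valid series--parallel operation and no edge of $D_n$ is removed, $W_n$ is series--parallel and its underlying unweighted graph contains $D_n$, establishing condition (1). Define an embedding $\phi_n : V(W_n) \to \ell_2$ recursively. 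At the outermost step, allocate two fresh mutually orthogonal unit vectors $e_n^{\mathrm{NS}}, e_n^{\mathrm{EW}}$, place the two poles at $\pm \tfrac{\alpha^{n}}{2} e_n^{\mathrm{NS}}$ so that their $\ell_2$-distance matches the outer shortcut weight, and embed each of the four copies of $W_{n-1}$ via a translated, rotated, scaled-by-$\alpha$ copy of $\phi_{n-1}$, using the $e_n^{\mathrm{EW}}$ direction to separate the two ``east/west'' halves. Orthogonality across scales makes the squared contributions from different recursion levels form a convergent geometric series, so the upper Lipschitz constant of $\phi_n$ is bounded independent of $n$ whenever $\alpha$ is sufficiently small.

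The main obstacle is the lower Lipschitz bound on $\phi_n$. Given $u, v \in V(W_n)$, let $k^\star$ be the coarsest recursion level at which $u$ and $v$ lie in distinct sub-blocks. One must show that (a) $d_{W_n}(u,v)$ is comparable to $\alpha^{k^\star}$, i.e. that shortest paths genuinely prefer the shortcut edges at scale $k^\star$ over any long route through $D_n$, and (b) the fresh orthogonal coordinates introduced at scale $k^\star$ account for at least a constant fraction of $\|\phi_n(u)-\phi_n(v)\|$. Both facts follow by induction on $n$ from the geometric decay of the weights $w_k$; the delicate point is calibrating $\alpha$ so that neither the ``ambient'' contribution (the shortcut-dominated distance at the separating scale) nor the ``internal'' contributions (recursive distances inside a common sub-block) dominates. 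Once $\alpha$ is so chosen, the resulting uniform distortion bound yields condition (2) of Theorem \ref{T:SerParL2}. Note that, in contrast to Theorem \ref{T:JS}, this $W_n$-metric is not the $D_n$-metric restricted to $V(D_n)$: the shortcuts drastically shrink pole distances, which is precisely what makes uniform embeddability into the superreflexive space $\ell_2$ possible.
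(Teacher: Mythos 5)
Your overall strategy --- retain a weighted pole-to-pole chord in every sub-diamond so that the chord becomes the shortest pole-to-pole route, then embed by a recursive ``Euclidean midpoint plus a fresh orthogonal bump'' map --- is exactly the strategy of the paper (there the chords arise by never deleting the edges that the diamond construction would replace, the level-$j$ edges receiving weight $\left(\frac12+\ep\right)^j$). But as written your construction is internally inconsistent. You give the copy of $D_{n-k}$ sitting at recursion depth $k$ a chord of weight $\alpha^{n-k}$ while keeping all $4^n$ original edges at weight $1$; thus the chords \emph{grow} with depth, and each depth-one sub-block of $W_n$ carries exactly the same weights as a standalone $W_{n-1}$ rather than an $\alpha$-scaled copy of it. Your recursive embedding, which contracts a depth-$k$ sub-block by the factor $\alpha^k$, therefore sends the weight-$1$ edges at the finest level to segments of length $O(\alpha^{n-1})$, and the lower Lipschitz bound fails outright; likewise the claim $d_{W_n}(u,v)\asymp\alpha^{k^\star}$ is false for these weights (every finest-generation vertex has only two incident edges, both of weight $1$, so it is at distance $\ge 1$ from every other vertex). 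The fix is to reverse the scaling --- chord of weight $\alpha^k$ at depth $k$, original edges of weight $\alpha^n$ --- and then the admissible range is $\alpha\in\left(\frac12,1\right)$, the \emph{opposite} of ``sufficiently small'': if $\alpha\le\frac12$ the depth-$k$ chord (weight $\alpha^k$) is no shorter than the route through two depth-$(k+1)$ chords (length $2\alpha^{k+1}$), the chords become metrically irrelevant, the orthogonal bump degenerates ($\omega^2=\alpha^{2(k+1)}-\frac14\alpha^{2k}\le 0$), and the metric reduces to a rescaling of the ordinary diamond metric, which by Theorem \ref{T:JS} does \emph{not} embed uniformly into $\ell_2$.

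Even with the corrected weights, your step (b) --- that the fresh coordinates introduced at the separating scale account for a definite fraction of $\|\phi_n(u)-\phi_n(v)\|$ --- is the entire difficulty and cannot be dispatched by ``induction on $n$ from the geometric decay.'' A shortest path between two such vertices is not confined to the separating scale: it may descend through many levels, and the bumps it acquires at finer scales live in directions created at those scales, so one must rule out that most of the path's length is invisible to the coarse coordinate one is tracking. The paper handles this by first proving that a shortest path uses at most two edges of each length (Claim \ref{C:le2}, via a quasiconvexity argument for weighted subdiamonds), and then following the single coordinate $e_{ut}$ attached to the longest edge of the path, using \eqref{E:ShrtUsDiag} to show that a shortest path cannot contain edges of $m$ consecutive lengths and hence cannot cancel more than the fraction $1-2^{-m}$ of that coordinate's increment. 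Some argument of this kind is indispensable; without it your proposal establishes only the easy upper Lipschitz bound and condition (1).
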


The point of the first condition is to show that graphs $\{W_n\}$
are topologically complicated (it is clear that such
series-parallel graphs as paths admit uniformly bilipschitz
embeddings into any nonzero Banach space). We prove Theorem
\ref{T:SerParL2} in Section \ref{S:SerPar}.

We do not know whether the second part of the Johnson-Schechtman
remark holds. Namely we do not know whether series-parallel graphs
admit uniformly bilipschitz embeddings into any nonsuperreflexive
space. It is worth mentioning that any counterexample should
involve a non-reflexive Banach space with nontrivial type because
Gupta, Newman, Rabinovich, and Sinclair \cite[Section 4.1]{GNRS04}
proved that series-parallel graphs admit uniformly bilipschitz
embeddings into $\ell_1$.

\section{Characterization of superreflexivity in terms of
hyperbolic groups}\label{S:hyperb}

The purpose of this section is to prove Theorem \ref{T:hyperb}.

\begin{proof} To prove part (a) we are going to use the Buyalo--Dranishnikov--Schroe\-der \cite{BDS07} result on the metric structure of hyperbolic
groups. We need the following definitions. A map $f:X\to Y$
between metric spaces $(X,d_X)$ and $(Y,d_Y)$ is called a {\it
quasi-isometric embedding} if there are $a_1, a_2 > 0$ and $b\ge
0$, such that
\begin{equation}\label{E:QuasiIsom} a_1 d_X(u,v)- b\le d_Y(f(u), f(v))\le a_2d_X(u,v) + b\end{equation} for
all $u,v\in X$. By a {\it binary tree} we mean an infinite tree in
which each vertex has degree $3$. By a {\it product of trees},
denoted $(\oplus_{i=1}^n T(i))_1$, we mean (see \cite[Remark
12.1.2]{BS07}) their Cartesian product with the $\ell_1$-metric,
that is,
\begin{equation}\label{E:L_1Prod}d(\{u_i\},\{v_i\})= \sum_{i=1}^n
d_{T(i)}(u_i,v_i).\end{equation}

\begin{theorem}[\cite{BDS07}]\label{T:BDS07} Every Gromov hyperbolic group admits a
quasi-isometric embedding into the product of finitely many copies
of the binary tree.
\end{theorem}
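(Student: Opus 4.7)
The plan is to reduce the theorem, via the boundary at infinity of $G$, to a statement about embedding Gromov's ``hyperbolic cone'' of a finite-dimensional compactum into a product of $\mathbb{R}$-trees, and then to pass from $\mathbb{R}$-trees to simplicial binary trees. Three ingredients drive the argument.

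First, I would invoke the Bonk--Schramm hyperbolic cone construction. The Cayley graph of a hyperbolic group $G$, based at the identity, is a \emph{visual} Gromov hyperbolic geodesic space: every point lies within bounded distance of a geodesic ray issuing from the basepoint. For any visual hyperbolic space $X$, equipping $\partial X$ with a visual metric $\rho$ of sufficiently small parameter realizes $X$ as quasi-isometric to the hyperbolic cone $\mathrm{Con}(\partial X,\rho)$, whose points are pairs $(\xi,t)$ with $\xi\in\partial X$ and $t\ge 0$, with a natural cone metric. It therefore suffices to embed $\mathrm{Con}(\partial G,\rho)$ quasi-isometrically into a finite product of binary trees.

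Second, I would use that $(\partial G,\rho)$ has finite \emph{capacity dimension} (a linearly controlled version of covering dimension, in the spirit of Nagata), a fact established for boundaries of hyperbolic groups by Dranishnikov and Buyalo--Lebedeva. Let $n$ denote this dimension. Then for every scale $\delta>0$ there is a cover $\mathcal{U}_\delta$ of $\partial G$ by sets of diameter at most $C\delta$, of multiplicity at most $n+1$, whose members can be properly colored with $n+1$ colors so that same-color members are pairwise $c\delta$-separated.

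Third, and this is the main construction, I would build one $\mathbb{R}$-tree $T_i$ per color $i\in\{1,\dots,n+1\}$. Its vertices are the color-$i$ members of $\mathcal{U}_{2^{-k}}$ across dyadic scales $k\in\mathbb{Z}_{\ge 0}$; each member at scale $2^{-k}$ is joined, by an edge of length comparable to $1$, to the unique (by monochromaticity and the separation) color-$i$ member at scale $2^{-k+1}$ that contains it. Define $F:\mathrm{Con}(\partial G,\rho)\to (\bigoplus_{i=1}^{n+1}T_i)_1$ by sending $(\xi,t)$ to the tuple whose $i$-th coordinate is the color-$i$ member of $\mathcal{U}_{2^{-t}}$ closest to $\xi$. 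To finish, each $T_i$ is doubling (the doubling constant is bounded in terms of $n+1$ and the doubling constant of $\partial G$), and any doubling $\mathbb{R}$-tree quasi-isometrically embeds into the regular $3$-valent tree by replacing each vertex of valence $>3$ by a local binary subdivision; the product map is the desired embedding.

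The chief obstacle is verifying the two-sided quasi-isometry inequality for $F$. The crux is that for cone points $(\xi,t)$ and $(\eta,s)$ the first coordinate in which their images separate corresponds to the smallest scale at which some color-$i$ cover element distinguishes $\xi$ from $\eta$; by the $c\delta$-separation built into the coloring this scale is comparable to $\log_2(1/\rho(\xi,\eta))$, which is exactly what Gromov's formula for the cone metric requires. Finite capacity dimension enters essentially here: without a uniform bound on the number of colors, the target would be an infinite product of trees and the $\ell_1$-sum of coordinate distances would diverge, so all the weight of the proof lies in the Dranishnikov--Buyalo--Lebedeva dimension bound together with this separation-and-coloring lemma.
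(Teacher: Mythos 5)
This statement is not proved in the paper at all: it is quoted verbatim from Buyalo--Dranishnikov--Schroeder \cite{BDS07} and used as a black box in the proof of Theorem \ref{T:hyperb}(a), so there is no internal argument to compare yours against. Measured instead against the published proof in \cite{BDS07} (and its refinement via capacity dimension in Buyalo--Lebedeva), your outline is essentially the right one: reduce to the hyperbolic cone over $\partial G$ equipped with a visual metric, invoke finiteness of the linearly controlled (capacity) dimension of the boundary to get colored coverings at all scales with the separation property, build one tree per color from the nested covers, and map a cone point to the nearest cover element of each color. Two places where your sketch papers over genuine work: first, a color-$i$ member at scale $2^{-k}$ need not be \emph{contained} in a unique color-$i$ member at scale $2^{-k+1}$ --- with diameter bound $C\delta$ and separation $c\delta$ one only gets uniqueness of the parent if the scale ratio is chosen large relative to $C/c$, so the dyadic scales must be replaced by scales $r^{-k}$ for suitable $r$, and ``contains'' by ``meets''; second, and more seriously, a single color class does \emph{not} cover $\partial G$, so ``the color-$i$ member of $\mathcal{U}_{2^{-t}}$ closest to $\xi$'' can be far from $\xi$ at that scale, and the lower quasi-isometry bound cannot be read off coordinatewise in the naive way --- one only knows that \emph{some} color separates $\xi$ from $\eta$ at the critical scale, and controlling the other coordinates is exactly what the ``alternating chains'' machinery of \cite{BDS07} is for. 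Neither issue is fatal, but both are where the actual content of the theorem lives, so as written your proposal is a correct strategy rather than a complete proof.
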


\medskip

This result shows that in order to prove part (a) of Theorem
\ref{T:hyperb} it suffices to find

\begin{enumerate}

\item \label{I:Bilip} A bilipschitz embedding of a product of
binary trees in the sense \eqref{E:L_1Prod} into any
nonsuperreflexive space.

\item \label{I:CorrQuasiIs} A way to modify a quasi-isometric
embedding of a hyperbolic group into a Banach space in order to
get a bilipschitz embedding.
\end{enumerate}

To achieve the first goal we use Theorem \ref{T:FinDet} stating
that embeddability of a locally finite metric space into a Banach
space is finitely determined. It is clear that the product (in the
sense \eqref{E:L_1Prod}) of finitely many binary trees is a
locally finite metric space. Therefore to achieve the goal of item
\ref{I:Bilip} it suffices to find bilipschitz embeddings of
products of $n$ binary trees of depth $k$ into any
nonsuperreflexive space $X$ with distortions bounded independently
of $k$ (for a fixed $n$).
\medskip

Bourgain \cite[Section 3]{Bou86} showed that there is an absolute
constant $B$ such that for any $k$ and any nonsuperreflexive
Banach space $X$ there is a $B$-bilipschitz embedding of a binary
tree of depth $k$ into $X$ (see \cite[Section 9.1]{Pis11} for a
more detailed description of this embedding). To embed a finite
sum of finite binary trees $(\oplus_{i=1}^n T(i))_1$ (where each
$T(i)$ is a binary tree of depth $k$) into an arbitrary
nonsuperreflexive Banach space $X$ we do the following. First we
find a finite-dimensional subspace $X_1\subset X$ containing a
$B$-bilipschitz image of $T(1)$ (where $B$ is the absolute
constant mentioned above). We assume that this embedding, and the
embeddings of other $T(i)$ introduced below have Lipschitz
constants $\le B$, and their inverses  have Lipschitz constants
$\le 1$.

\begin{remark} Using the argument of James \cite{Jam64} (see
\cite[p.~261]{Bea82}, \cite{Jam72a}, and \cite{SS70}) one can show
that $B$ can be chosen to be any constant in $(1,\infty)$.
\end{remark}

Let $\lambda\in(0,1]$. Recall that a subspace $M\subset X^*$  is
called {\it $\lambda$-norming over a subspace $Y\subset X$} if
\[\forall y\in Y~\sup\{|f(y)|:~f\in M,~||f||\le 1\}\ge\lambda||y||.\]

Pick any $\lambda\in(0,1)$. Let $M_1\subset X^*$ be a
finite-dimensional subspace which is $\lambda$-norming over $X_1$.
The existence of such subspace can be shown as follows. Let
$\{x_i\}_{i=1}^n$ be an $(1-\lambda)$-net in the unit sphere of
$X_1$ and let $M_1$ be the linear span of functionals $x_i^*$
satisfying the conditions $||x_i^*||=1$ and $x_i^*(x_i)=1$. The
verification that $M_1$ is $\lambda$-norming is immediate.
\medskip

The subspace $(M_1)_{\top}:=\{x\in X:~\forall x^*\in M_1~
x^*(x)=0\}$ is of finite codimension. It is clear that for $x_1\in
X_1$ and $x_2\in(M_1)_{\top}$ we have
$||x_1+x_2||\ge\lambda||x_1||$.\medskip

It is also clear that the subspace $(M_1)_{\top}$ is
nonsuperreflexive. Hence we can find in it a finite-dimensional
subspace $X_2$ containing a $B$-bilipschitz copy of $T(2)$. Now
let $M_2\subset X^*$ be a finite-dimensional subspace which is
$\lambda$-norming over  the linear span of $X_1\cup X_2$. We
continue in an obvious way (e.g. we let $X_3$ to be a
finite-dimensional subspace of  $(M_2)_{\top}$ containing a
$B$-bilipschitz image of $T(3)$).
\medskip

The finite-dimensional subspaces $\{X_i\}_{i=1}^n$ constructed in
this manner are such that for any choice of $x_i\in X_i$,
$i=1,\dots, n$, the inequalities
\begin{equation}\label{E:norming}\begin{split} \lambda\max\{||x_1||,
||x_1+x_2||,\dots,||x_1+\dots+x_{n-1}||\}&\le
||x_1+\dots+x_n||\\&\le||x_1||+||x_2||+\dots+||x_n||\end{split}\end{equation}
hold. The leftmost inequality implies that for each $1\le t<n$ we
have
\[\begin{split}||x_{t+1}+\dots+x_n||&\le ||x_1+\dots+x_n||+||x_1+\dots+x_t||\\
&\le\frac{\lambda+1}{\lambda}\,||x_1+\dots+x_n||.
\end{split}\]
Applying \eqref{E:norming} again we get
\[||x_{t+1}||\le \frac{\lambda+1}{\lambda^2}||x_1+\dots+x_n||.\]
We conclude that
\[||x_1+\dots+x_n||\ge\frac{\lambda^2}{\lambda+1}\max_{1\le j\le n}||x_j||\ge \frac{\lambda^2}{(\lambda+1)n}\sum_{i=1}^n||x_i||.\]
So the $\ell_1$-sum $(\oplus_{i=1}^kX_i)_1$ admits a natural
bilipschitz embedding into $X$ with distortion $\le
\frac{(\lambda+1)n}{\lambda^2}$. The image of this embedding is
the linear span of the spaces $X_i$ in $X$. Combining this
embedding with the $B$-bilipschitz embeddings of $T(i)$ into $X_i$
we get a $\frac{B(\lambda+1)n}{\lambda^2}$-bilipschitz embedding
of $(\oplus_{i=1}^n T(i))_1$ into $X$. Applying Theorem
\ref{T:FinDet} we get that a finite product of (infinite) binary
trees admits a bilipschitz embedding into $X$. Combining this with
Theorem \ref{T:BDS07}, we get that for an arbitrary finitely
generated word hyperbolic group $G$ there is a quasi-isometric
embedding $T$ of $G$ into $X$.
\medskip

It remains to modify the quasi-isometric embedding in order to get
a bilipschitz embedding. Easy examples show that this cannot be
done for general metric spaces; our purpose is to show that this
can be done in the case where we embed a locally finite uniformly
discrete metric space into an infinite-dimensional Banach space
(observe that a finite-dimensional Banach space cannot be
nonsuperreflexive).

\begin{lemma}\label{L:QuasiBilip} Let $M$ be a locally finite uniformly discrete metric space admitting a
quasi-isometric embedding into an infinite dimensional Banach
space $X$. Then $M$ admits a bilipschitz embedding into $X$.
\end{lemma}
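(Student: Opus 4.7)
The plan is to invoke Theorem~\ref{T:FinDet} and reduce the lemma to producing, for each finite $F\subset M$, a bilipschitz embedding $g_F:F\to X$ whose distortion is bounded by a constant independent of $F$. Let $\delta>0$ be a uniform discreteness constant of $M$ and set $R:=2b/a_1$. Since $d_M(u,v)\ge\delta$ for all distinct $u,v$, the upper inequality in (\ref{E:QuasiIsom}) already yields $\|f(u)-f(v)\|\le(a_2+b/\delta)\,d_M(u,v)$, while the lower inequality yields $\|f(u)-f(v)\|\ge(a_1/2)\,d_M(u,v)$ on the \emph{far} pairs $d_M(u,v)\ge R$. The only obstruction is that on \emph{near} pairs $\delta\le d_M(u,v)<R$ the quantity $\|f(u)-f(v)\|$ may be arbitrarily small, and my aim is to remove this defect by a controlled perturbation.

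Fix $F=\{x_1,\dots,x_n\}$ and set $Z_F:=\mathrm{span}\,f(F)$, a finite-dimensional subspace of $X$. Fix once and for all $\lambda\in(0,1)$ (say $\lambda=1/2$) and $\ep>0$, both independent of $F$. Applying the finite $(1-\lambda)$-net construction already recorded in this section, produce a finite-dimensional subspace $N_F\subset X^*$ which is $\lambda$-norming over $Z_F$. Its annihilator $(N_F)_\top$ has finite codimension in $X$; since $X$ is infinite-dimensional, $(N_F)_\top$ is itself infinite-dimensional, so Riesz's lemma supplies vectors $e_1,\dots,e_n\in(N_F)_\top$ of norm $1$ with $\|e_i-e_j\|\ge1/2$ for all $i\ne j$. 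Define $g_F(x_i):=f(x_i)+\ep\,e_i$.

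The key estimates are the two norming inequalities $\|z+y\|\ge\lambda\|z\|$ and $\|z+y\|\ge\lambda(\lambda+1)^{-1}\|y\|$ valid whenever $z\in Z_F$ and $y\in(N_F)_\top$; the first is exactly the inequality already derived in the $\ell_1$-sum construction above, and the second follows from it via $\|y\|\le\|z+y\|+\|z\|\le(1+1/\lambda)\|z+y\|$. Applying them with $z=f(x_i)-f(x_j)$ and $y=\ep(e_i-e_j)$: on far pairs the first gives $\|g_F(x_i)-g_F(x_j)\|\ge(\lambda a_1/2)\,d_M(x_i,x_j)$, and on near pairs the second gives $\|g_F(x_i)-g_F(x_j)\|\ge\lambda\ep/(2(\lambda+1))$, which, since $d_M(x_i,x_j)<R$, is at least $\bigl(\lambda\ep/(2(\lambda+1)R)\bigr)\,d_M(x_i,x_j)$. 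The matching upper bound $\|g_F(x_i)-g_F(x_j)\|\le(a_2+b/\delta+2\ep/\delta)\,d_M(x_i,x_j)$ follows from the triangle inequality and $\|e_i-e_j\|\le2$. Since all these constants depend only on $a_1,a_2,b,\delta,\lambda,\ep$ and not on $F$ or $|F|$, the $g_F$'s have uniformly bounded distortion, and Theorem~\ref{T:FinDet} converts this family into a bilipschitz embedding $M\to X$.

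I do not foresee any essential obstacle; the only point requiring care is that $\lambda$ and $\ep$ be fixed independently of $F$, so that the final distortion bound is genuinely uniform. This is automatic because the two norming inequalities are dimension-free and because Riesz's lemma produces arbitrarily long $1/2$-separated sequences on the unit sphere of any infinite-dimensional Banach space (here applied to $(N_F)_\top$), so the fact that the codimension of $N_F$ grows with $|F|$ is harmless.
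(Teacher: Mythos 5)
Your proof is correct, but it takes a genuinely different route from the paper's. The paper perturbs the quasi-isometric embedding $T$ globally: after rescaling $M$ to be $1$-separated, it fixes a $1$-net $N$ in $X$ and uses the fact that in an infinite-dimensional space every ball of radius $4$ contains infinitely many net points, together with local finiteness of $T(M)$, to move each $T(u)$ to a nearby net point $R(u)$ so that the image becomes $1$-separated; the map $R$ is still quasi-isometric with the same $a_1,a_2$ and $b'=b+8$, and the concluding two-case analysis (far pairs via $a_1d-b'\ge \frac{a_1}{2}d$, near pairs via $\|R(u)-R(v)\|\ge 1$) is essentially the same dichotomy you use. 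You instead localize to finite subsets, invoke Theorem~\ref{T:FinDet} a second time, and perturb by $\varepsilon e_i$ with the $e_i$ a separated sequence in the annihilator of a subspace norming over ${\rm span}\, f(F)$ --- recycling the norming machinery the paper sets up for embedding the $\ell_1$-product of trees. Both arguments rest on the same two pillars: uniform discreteness converts the additive constant $b$ into a Lipschitz upper bound, and infinite-dimensionality supplies room for a uniformly separated perturbation that repairs the collapsed near pairs. The paper's version buys a single globally defined map with uniformly discrete image and needs no second appeal to finite determination; yours buys cleaner quantitative bookkeeping (explicit constants depending only on $a_1,a_2,b,\delta,\lambda,\varepsilon$) and avoids having to re-verify that the perturbed map is still quasi-isometric. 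Two small remarks: in the degenerate case $b=0$ your set of near pairs is empty, so the division by $R=2b/a_1$ never occurs, and it is worth saying so; and you should note explicitly that the positive lower bound on near pairs also gives injectivity of $g_F$, which a bilipschitz embedding requires.
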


\begin{proof} We may assume that the distance between any two distinct elements in $M$ is at least
$1$ (multiplying all distances in $M$ by some positive number, if
necessary). Let $T:M\to X$ be a quasi-isometric embedding with
constants $a_1$, $a_2$, and $b$ (see \eqref{E:QuasiIsom}). Since
the distance between any distinct elements of the $M$ is at least
$1$, it is clear that $T$ is $(a_2+b)$-Lipschitz.\medskip

To get the desired estimate from below we perturb the map in order
to make it injective and to make its image uniformly discrete.
This is done in the following way. We consider a $1$-net $N$ in
$X$, that is, a set of points $\{x_i\}_{i=1}^\infty$  such that
$||x_i-x_j||\ge 1$ for $i\ne j$ and $\forall x\in X\,\exists i\in
\mathbb{N}$ $||x-x_i||\le 1$ (without loss of generality we may
assume that $X$ is separable since the linear span of $T(M)$ is
separable). Since $X$ is infinite-dimensional, it is easy to see
that the ball of radius $4$ centered at any $x\in X$ contains
infinitely many points of $N$. In fact, one can show that a ball
of radius $3$ centered at $x$ contains an infinite sequence
$\{s_i\}_{i=1}^\infty$ satisfying $||s_i-s_j||\ge \frac52$. This
implies that points of $N$ which are close to $\{s_i\}$ are
distinct and are inside the ball of radius $4$ centered at $x$.
(Using results of Kottman \cite{Kot75} and  Elton-Odell
\cite{EO81} one can improve the constant $4$ in this statement,
but we do not need this improvement here). On the other hand,
$T(M)$, as an image of a locally finite metric space under a
quasi-isometric embedding, is locally finite (recall that we do
not require a locally finite metric space to be uniformly
discrete), and each point in it has at most finitely many
pre-images. Therefore for each $u\in M$ we can find a point
$R(u)\in N$ such that $||T(u)-R(u)||\le 4$ and $||R(u)-R(v)||\ge
1$ if $u\ne v$, $u,v\in M$. It is easy to verify that $R$ is also
a quasi-isometric embedding (\eqref{E:QuasiIsom} is satisfied with
the same $a_1$ and $a_2$ and with $b'=b+8$), hence it is also a
Lipschitz map.
\medskip

To show that it is bilipschitz it remains to get an estimate for
$||R(u)-R(v)||$ from below. We observe that
\begin{equation}\label{E:QuasiIsomR} a_1 d_M(u,v)- b'\le ||R(u)-R(v)||\end{equation}
implies the estimate $||R(u)-R(v)||\ge \frac{a_1}2\,d_M(u,v)$ if
$d_M(u,v)\ge \frac{2b'}{a_1}$. If $b'=0$, there is nothing to
prove. If $b'>0$ and $d_M(u,v)\le \frac{2b'}{a_1}$, we have
$||R(u)-R(v)||\ge \frac{a_1}{2b'}\,d_M(u,v)$ just because
$||R(u)-R(v)||\ge 1$. Thus $R$ is a bilipschitz
embedding.\end{proof}

Now we turn to the proof of part (b). To prove this result we need
another result on the structure of hyperbolic groups. We use the
metric geometry terminology of \cite{BH99}. Let $K\ge 0$. A subset
$Y$ of a geodesic metric space $X$ is called {\it $K$-quasiconvex}
if any geodesic path in $X$ with endpoints in $Y$ lies in the
$K$-neighborhood of $Y$ . A subset $Y$ is called {\it quasiconvex}
if it is $K$-quasiconvex for some $K<\infty$.

\begin{theorem}\label{T:FreeQuasiC} Let $G$ be a finitely generated word hyperbolic group
which does not contain an infinite cyclic subgroup of finite
index. Then $G$ contains a free subgroup $H$ with two generators
as a quasiconvex subgroup.
\end{theorem}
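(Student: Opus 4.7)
My plan is to construct the free subgroup via a ping-pong argument on the Gromov boundary $\partial G$ and then deduce quasiconvexity from the fact that the resulting subgroup is in fact quasi-isometrically embedded. The first step is to observe that under the hypotheses, $G$ is non-elementary: every infinite hyperbolic group contains an element of infinite order (a so-called hyperbolic element) with two distinct fixed points $g^{+\infty}, g^{-\infty}\in\partial G$, and if $G$ were not virtually $\mathbb{Z}$ then $\partial G$ must contain more than these two points. Indeed, if $\partial G$ had only two points then $G$ would stabilize this set and hence contain an infinite cyclic subgroup of finite index, contradicting the assumption.

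Given that $|\partial G|>2$, I would pick a hyperbolic element $a\in G$ and, using the fact that the $G$-action on $\partial G$ is minimal on non-elementary hyperbolic groups, choose $g\in G$ such that $b:=gag^{-1}$ satisfies $\{b^{+\infty},b^{-\infty}\}\cap\{a^{+\infty},a^{-\infty}\}=\emptyset$. Replacing $a$ and $b$ by sufficiently high powers $a^N, b^N$ and applying the classical ping-pong lemma on $\partial G$ (with small neighborhoods of the four boundary points as the ping-pong sets, using the north-south dynamics of hyperbolic isometries) shows that $H:=\langle a^N, b^N\rangle$ is free of rank $2$.

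The core technical step is quasiconvexity. For this I would invoke a standard fact about hyperbolic groups: if $a,b$ are hyperbolic elements with pairwise disjoint boundary fixed-point sets, then for all sufficiently large $N$, every reduced word $w$ in $a^N$ and $b^N$ satisfies the inequality $|w|_G \ge c\cdot \ell(w)$, where $\ell(w)$ is the word length of $w$ in $H$ with respect to the generating set $\{a^N,b^N\}$ and $c>0$ is independent of $w$. This is usually proved by tracking how the images of the basepoint under prefixes of $w$ ``fellow-travel'' with a bi-infinite geodesic in the Cayley graph of $G$ whose endpoints on $\partial G$ are pushed around by the ping-pong dynamics; the Morse/stability lemma for quasi-geodesics in hyperbolic spaces then gives $H\hookrightarrow G$ is a quasi-isometric embedding, and hence $H$ is quasiconvex in $G$.

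The main obstacle is making the ping-pong on the boundary yield a clean \emph{metric} estimate in the Cayley graph rather than just algebraic freeness; this is exactly where the hyperbolicity of $G$ and the Morse lemma for quasi-geodesics enter crucially. The existence of an element $g$ conjugating $a^{\pm\infty}$ off themselves also requires some care, but it follows from the standard fact that in a non-elementary hyperbolic group the action on $\partial G$ has no finite invariant subset other than possibly a two-point set stabilized by a virtually cyclic subgroup---which is ruled out by our hypothesis that $G$ is not virtually $\mathbb{Z}$.
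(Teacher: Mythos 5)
Your outline is essentially correct, but it is a genuinely different route from the paper's: the paper gives no self-contained argument for Theorem~\ref{T:FreeQuasiC} at all, instead quoting Arzhantseva \cite{Arz01} for the torsion-free case and obtaining the general case by combining Dahmani--Guirardel--Osin \cite[Theorem 6.14]{DGO11} with Sisto \cite[Theorem 2]{Sis13}. You instead reconstruct the classical direct proof: an infinite hyperbolic group that is not virtually $\mathbb{Z}$ is non-elementary, hence has two independent loxodromic elements; ping-pong on $\partial G$ makes $\langle a^N,b^N\rangle$ free for large $N$; and the local-to-global/Morse lemma upgrades the resulting local quasi-geodesity to a quasi-isometric embedding, which in a hyperbolic group is equivalent to quasiconvexity. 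This argument is uniform in the presence of torsion, so it actually covers the general case in one stroke; what it costs is that you are leaning on several nontrivial ``standard facts'' (that an infinite hyperbolic group has an element of infinite order, north--south dynamics, and the fellow-traveling estimate $|w|_G\ge c\,\ell(w)$), whereas the paper's citations package all of that into published statements. Two small points deserve care in a full write-up: first, choosing $g$ with $\{ga^{\pm\infty}\}\cap\{a^{\pm\infty}\}=\emptyset$ requires not just that the conjugated fixed-point pair is \emph{different} but that it is \emph{disjoint}; this follows from the standard fact that two loxodromic elements of a hyperbolic group sharing one boundary fixed point must share both, but you should say so explicitly. Second, the theorem as you (and the paper) state it tacitly assumes $G$ is infinite --- a finite group satisfies the hypothesis vacuously --- so your opening reduction to the non-elementary case is where that assumption is silently used; in the paper's application (Theorem~\ref{T:hyperb}(b)) the group is explicitly assumed infinite, so no harm is done.
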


In the case of torsion free groups this result was obtained by
Arzhantseva \cite{Arz01}. The general case follows by combining
results of Dahmani, Guirardel, Osin \cite[Theorem 6.14]{DGO11} and
Sisto \cite[Theorem 2]{Sis13}.

A proof of part (b) of Theorem \ref{T:hyperb} can be derived from
Theorem \ref{T:FreeQuasiC} as follows. Suppose that a Banach space
$X$ admits a bilipschitz embedding of an infinite word hyperbolic
group $G$ satisfying the conditions of Theorem \ref{T:FreeQuasiC}.
By Theorem \ref{T:FreeQuasiC}, $G$ contains a free subgroup $H$
with two generators, we denote them $a$ and $b$, as a quasiconvex
subgroup. The quasiconvexity implies that the identical map of $H$
endowed with the metric of the graph
$\cay(H,\{a,b,a^{-1},b^{-1}\})$ onto $H$ endowed with the metric
induced from $G$ is a quasi-isometric embedding, see \cite[Lemma
3.5]{BH99}.
\medskip

Thus we get a quasi-isometric embedding of the tree
$\cay(H,\{a,b,a^{-1},b^{-1}\})$  into the Banach space $X$. It is
easy to see that $X$ cannot be finite-dimensional. Using Lemma
\ref{L:QuasiBilip} we modify the quasi-isometric embedding and get
a bilipschitz embedding of the tree
$\cay(H,\{a,b,a^{-1},b^{-1}\})$ into $X$. By the result of
Bourgain \cite{Bou86} (see also a short proof in \cite{Klo13+}),
we get that the Banach space $X$ is nonsuperreflexive.
\end{proof}

The proof of the next proposition is well known and elementary. It
is included because I have not found a suitable reference. We
would like to emphasize that in Proposition \ref{P:ZFiniteInd} we
consider the Cayley graph as a set of elements of $G$ with the
shortest path distance (and not as a $1$-dimensional simplicial
complex).

\begin{proposition}\label{P:ZFiniteInd} Let $G$ be  a group
containing $\mathbb{Z}$ as a finite index subgroup. Then the
Cayley graph $\cay(G,S)$ with respect to any finite set of
generators admits a bilipschitz embedding into $\mathbb{R}$.
\end{proposition}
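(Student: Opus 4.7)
The plan is to construct an explicit bijection $\phi\colon G\to\mathbb{Z}\subset\mathbb{R}$ from the coset decomposition of $G$ modulo the given $\mathbb{Z}$-subgroup and verify that it is bilipschitz with respect to the word metric $d_S$ on $\cay(G,S)$. Let $H=\langle a\rangle\cong\mathbb{Z}$ be the given finite-index subgroup, set $n=[G:H]$, and fix right coset representatives $g_1=e,g_2,\dots,g_n$ for $H\backslash G$, with $C:=\max_i|g_i|_S$. Then every $g\in G$ has a unique expression $g=a^{k(g)}g_{i(g)}$ with $k(g)\in\mathbb{Z}$ and $i(g)\in\{1,\dots,n\}$, and I would define
\[\phi(g):=n\cdot k(g)+\bigl(i(g)-1\bigr),\]
which is visibly a bijection onto $\mathbb{Z}$.

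For the upper Lipschitz bound, I would compute how $\phi$ changes under right multiplication by a generator. For each pair $(i,s)\in\{1,\dots,n\}\times(S\cup S^{-1})$, the element $g_is$ lies in a unique right coset $Hg_{j(i,s)}$, so $g_is=a^{m(i,s)}g_{j(i,s)}$ for some integer $m(i,s)$. Since this data ranges over a finite set,
\[\phi(gs)-\phi(g)=n\cdot m(i(g),s)+\bigl(j(i(g),s)-i(g)\bigr)\]
is uniformly bounded, say by $L$, and chaining this bound along a geodesic in $\cay(G,S)$ gives $|\phi(g)-\phi(g')|\le L\cdot d_S(g,g')$.

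The lower bound rests on the fact that $\langle a\rangle$ is quasi-isometrically embedded in $(G,d_S)$, more precisely on the clean estimate $|a^k|_S\ge c|k|$ for all $k\in\mathbb{Z}$ and some $c>0$ with no additive error. I would establish this using the finite set $S_H:=H\cap\{g_isg_j^{-1}:s\in S\cup S^{-1},\ 1\le i,j\le n\}$, which generates $H$: given any shortest $S$-word $s_1\cdots s_L$ representing $a^k$ and writing the partial products as $s_1\cdots s_t=a^{k_t}g_{i_t}$, the consecutive ratios $g_{i_{t-1}}s_tg_{i_t}^{-1}=a^{k_t-k_{t-1}}$ all lie in $S_H$ and express $a^k$ as an $S_H$-word of length $L$, giving $|a^k|_{S_H}\le|a^k|_S$; the inequality $|a^k|_{S_H}\ge|k|/\max_{s'\in S_H}|s'|_H$ is then immediate on $\mathbb{Z}$.

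Granted $|a^k|_S\ge c|k|$, writing $g=a^kg_i$, $g'=a^{k'}g_{i'}$, $\ell=k'-k$, the triangle inequality yields $d_S(g,g')\le|a|_S\cdot|\ell|+2C$, while $|\phi(g)-\phi(g')|=|n\ell+(i'-i)|$ is at least $1$ whenever $g\ne g'$ and at least $n|\ell|-(n-1)$ when $\ell\ne 0$. These estimates give $|\phi(g)-\phi(g')|\ge c''d_S(g,g')-B$ for explicit constants; since distinct points in both $G$ and $\phi(G)\subset\mathbb{Z}$ are separated by at least $1$, the additive $B$ can be absorbed into the multiplicative constant to produce $|\phi(g)-\phi(g')|\ge c'\cdot d_S(g,g')$. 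I expect the main obstacle to be precisely this passage from a quasi-isometric to a bilipschitz estimate in a one-dimensional target, where the trick of Lemma \ref{L:QuasiBilip} is unavailable; the clean multiplicative lower bound $|a^k|_S\ge c|k|$ without additive error is what makes it go through.
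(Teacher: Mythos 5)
Your proposal is correct and follows essentially the same route as the paper: an explicit coset-interleaving bijection onto a discrete subset of $\mathbb{R}$, with the Lipschitz bound obtained by bounding the displacement of $\phi$ under multiplication by a single generator and chaining along a geodesic, and the reverse bound obtained from an explicit short path (your triangle inequality $d_S(g,g')\le |a|_S\,|\ell|+2C$) together with uniform discreteness to absorb the additive constant; the only cosmetic differences are that the paper first passes to the convenient generating set $\{x^{\pm1},g_2^{\pm1},\dots,g_m^{\pm1}\}$ and spaces the cosets by $1/m$ rather than relabelling onto $\mathbb{Z}$. One remark: your middle paragraph establishing $|a^k|_S\ge c|k|$ via the Schreier generators $S_H$ is never actually invoked in your final estimates --- that inequality is a consequence of the chaining upper bound (take $g=e$, $g'=a^k$), not an ingredient of the lower bound --- and the paper's proof dispenses with it entirely.
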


\begin{proof} Let $x$ be the generator
of $\mathbb{Z}$. We use multiplicative notation, so
\[\mathbb{Z}=\{\dots,x^{-n},\dots,x^{-1},e,x,\dots,x^n,\dots\}.\]
Let $g_1=e,g_2,\dots,g_{m}$ be representatives of left cosets of
$\mathbb{Z}$ in $G$. We consider the most straightforward
embedding. We map
\[\dots,x^{-n},\dots,x^{-1},e,x,\dots,x^n,\dots\]
to
\[\dots,-n,\dots,-1,0,1,\dots,n,\dots,\]
respectively; and map
\[x^n,g_2x^n,\dots,g_{m}x^{n}\]
to points
\[n,n+\frac1m,\dots,n+\frac{m-1}m,\]
respectively

We consider the following generating set
$S=\{x,x^{-1},g_2,g_2^{-1},\dots,g_m,g_m^{-1}\}$ and let
$\cay(G,S)$ be the right-invariant Cayley graph.  Since word
metrics for different sets of generators are
bilipschitz-equivalent, it suffices to show that the described
above embedding, let us denote it $\varphi$, is bilipschitz as an
embedding of $\cay(G,S)$ into $\mathbb{R}$.
\medskip

To estimate the Lipschitz constant of
$\varphi:\cay(G,S)\to\mathbb{R}$ we need to estimate from above
the distance between images of adjacent vertices, that is,
$|\varphi(u)-\varphi(x^{\pm1}u)|$ and
$|\varphi(u)-\varphi(g_i^{\pm1}u)|$, $i=2,\dots,m$.\medskip

We have $u=g_ix^k$ for some $i$ and $k$. Observe that
$g_jg_i=g_{k(i,j)}x^{p(i,j)}$, $g_j^{-1}g_i=g_{l(i,j)}x^{r(i,j)}$,
$xg_i=g_{k(i)}x^{p(i)}$, and $x^{-1}g_i=g_{l(i)}x^{r(i)}$. This
implies that $x^{\pm1}u$ and $g_i^{\pm1}u$ are of the form
$g_sx^{k+t}$, where the absolute value of $t$ is bounded above by
\[T:=\max_{i,j}\{|p(i,j)|, |r(i,j)|, |p(i)|, |r(i)|\}.\]

The number $T$ depends on the group $G$ and the choice of the
coset representatives, but not on $i$ and $k$. The desired
estimates of $|\varphi(u)-\varphi(x^{\pm1}u)|$ and
$|\varphi(u)-\varphi(g_i^{\pm1}u)|$, $i=2,\dots,m$ from above
follow.
\medskip

Now we estimate the Lipschitz constant of $\varphi^{-1}$. If
$k=l$, we have $|\varphi(g_ix^k)-\varphi(g_jx^l)|\ge\frac1m$, and
the word distance between $g_ix^k$ and $g_jx^k$ is $2$, and we are
done in this case. Observe that for $k>l$ we have
$|\varphi(g_ix^k)-\varphi(g_jx^l)|\ge
\left|k-l-\frac{m-1}m\right|$. On the other hand, we can reach
$g_ix^k$ from $g_jx^l$ traversing $k-l+2$ edges (first we traverse
the edge corresponding to $g_j^{-1}$,  then edges corresponding to
$x$ ($(k-l)$ times since $k>l$), then $g_i$). The estimate for the
Lipschitz constant of $\varphi^{-1}$ follows.
\end{proof}

\section{Binary trees do not admit uniformly bilipschitz embeddings into
diamonds}\label{S:NETreeInDiam}

\begin{proof}[Proof of Theorem
\ref{T:NETreeInDiam}] We are going to show that for any
$k\in\mathbb{N}$, no matter how we choose the numbers
$m(n)\in\mathbb{N}$ and $p(n)\in\mathbb{N}\cup\{0\}$, it is
impossible to find maps $F_n: T_n\to D_{m(n)}$ such that
\begin{equation}\label{E:bilip}\forall n~\forall u,v\in
V(T_n)\quad 2^{p(n)}d_{T_n}(u,v)\le d_{D_{m(n)}}(F_nu,F_nv)\le
2^k\cdot 2^{p(n)}d_{T_n}(u,v).\end{equation}

Let us remind that we normalize distances in diamonds in such a
way that each edge has length $1$.\medskip

{\bf Note:} We do not have to consider negative $p(n)$ because in
such cases we may replace $D_{m(n)}$ by $D_{m(n)-p(n)}$ and use
the natural map of $D_{m(n)}$ into $D_{m(n)-p(n)}$, which
multiplies all distances by $2^{-p(n)}$.\medskip

We are going to use the notion of a {\it subdiamond}. It is
defined as a part of a diamond which evolved from  an edge. A
subdiamond has naturally defined {\it top} and {\it bottom}.
(These notions are defined up to the choice of the bottom and the
top of $D_0$.) The {\it height} of a subdiamond is the distance
from its top to its bottom.

\begin{lemma}\label{L:Entropy} The cardinality of a $2^{p(n)}$-separated set (i.e. a set satisfying $d(u,v)\ge 2^{p(n)}$ for any $u\ne v$) in a subdiamond of height $2^h$
does not exceed $2\cdot 4^{h-p(n)}$.
\end{lemma}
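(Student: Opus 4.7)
The plan is to prove Lemma~\ref{L:Entropy} by induction on $h-p(n) \geq 0$; set $p := p(n)$ for brevity. A subdiamond of height $2^h$ is isometric to $D_h$ with unit-length edges, since subdiamonds embed isometrically in the ambient diamond (any path that exits a subdiamond has to pay the full height to re-enter). By the recursive construction, the outermost level of $D_h$ is a $4$-cycle through vertices $T,L,B,R$ in which each of the four arcs $TL$, $LB$, $TR$, $RB$ has been inflated to an isometrically embedded subdiamond of height $2^{h-1}$; these four subdiamonds cover $V(D_h)$.

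For the base case $h=p$, I claim any $2^p$-separated subset $S \subseteq V(D_p)$ has $|S| \leq 2$. A short induction gives $\diam(D_p) = 2^p$ and shows that every vertex of $D_p$ lies on a geodesic from $T$ to $B$, so the height function $\phi(x) := d(T,x)$ satisfies $\phi(x) + d(x,B) = 2^p$. The triangle inequalities via $T$ and via $B$ give
\[
d(x,y) \leq \min\bigl(\phi(x)+\phi(y),\; 2^{p+1}-\phi(x)-\phi(y)\bigr).
\]
Since $\diam(D_p) = 2^p$, every pair in $S$ satisfies $d(x,y) = 2^p$; the above inequality then forces $\phi(x)+\phi(y) = 2^p$. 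If $|S|\geq 3$, the three resulting equations give $\phi \equiv 2^{p-1}$ on $S$, i.e.\ $S$ lies on the equator $\phi^{-1}(2^{p-1})$. Inspection of the four quadrants shows that within each quadrant $\phi$ attains the value $2^{p-1}$ only at one of the middle vertices $L$ or $R$; globally the equator is $\{L,R\}$, contradicting $|S|\geq 3$.

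For the inductive step, let $S$ be a $2^p$-separated set in a subdiamond of height $2^h$. Covering $V(D_h)$ by the four subdiamonds of height $2^{h-1}$ from the outer decomposition, each restriction of $S$ is $2^p$-separated in a subdiamond of height $2^{h-1}$, so the induction hypothesis bounds each restriction by $2\cdot 4^{h-1-p}$. Summing gives
\[
|S| \leq 4\cdot 2\cdot 4^{h-1-p} = 2\cdot 4^{h-p},
\]
as claimed.

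The only mildly subtle point is the base case, specifically the exact identification of the equator as $\{L,R\}$; the inductive step is a four-fold counting argument made immediate by the clean recursive decomposition.
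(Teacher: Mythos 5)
Your proof is correct and takes essentially the same route as the paper: decompose the subdiamond of height $2^h$ into $4^{h-p}$ isometrically embedded subdiamonds of height $2^{p}$, each of which meets the separated set in at most two points (your induction on $h-p$ simply unrolls into this count, since the paper notes the number of such subdiamonds equals the number of edges of $D_{h-p}$, which quadruples at each step). The only substantive difference is that you prove the two-point base case in detail via the height function and the equator $\{L,R\}$, whereas the paper asserts it as easy to see.
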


\begin{proof} It is easy to see that  each subdiamond of height
$2^{p(n)}$ contains at most two vertices out of each
$2^{p(n)}$-separated set. The number of subdiamonds of height
$2^{p(n)}$ in a diamond of height $2^h$ is equal to the number of
edges in the diamond of height $2^{h-p(n)}$. This number of edges
is $4^{h-p(n)}$, because in each step of the construction of
diamonds the number of edges quadruples.
\end{proof}

By a {\it subtree} in $T_n$ we mean the subgraph consisting of
some vertex and all of its descendants. By Lemma \ref{L:Entropy},
if there is a subtree $S$ of $T_n$ whose order (number of
vertices) is $>2\cdot 4^{h-p(n)}$ and whose root is mapped by
$F_n$ into a subdiamond $M$ of height $2^h$, some of the vertices
of $S$ have to be mapped outside the subdiamond, and so some of
the root-leaf paths in $S$ have to {\it leave} $M$.
\medskip

Our next observation is: there are two {\it exits} in a
subdiamond. We mean that there are two vertices in any subdiamond
$M$ whose deletion would separate the subdiamond from the rest of
the diamond. This happens just because the subdiamond evolved from
an edge. In the proof of Lemma \ref{L:Only2Leave} we combine this
fact with the bilipschitz condition on $F_n$ and get the
conclusion that two root-leaf paths in $S$ with small intersection
cannot leave $M$ through the same exit if they stay in $M$ for
long time before leaving. Here we say that a path $P$ in $T_n$
{\it leaves $M$ through the exit} $v$ if there are two consecutive
vertices $u$ and $w$ in $P$, such that $F_nu\in M$, $F_nw\notin M$
and
\begin{equation}\label{E:leave} d_{D_{m(n)}}(F_nu,v)+ d_{D_{m(n)}}(v,F_nw)\le
2^k\cdot2^{p(n)}.\end{equation} Observe that the bilipschitz
condition \eqref{E:bilip} implies that if some vertices of $P$ are
in $M$, and some other vertices are not, then $P$ should leave $M$
through one of the two exits.

\begin{lemma}\label{L:Only2Leave} Let $S$ be a subtree of $T_n$ whose root $s$ is mapped into a
subdiamond $M$. Suppose that $F_n$ maps at least $2^{k}+2$
generations of descendants of $s$ into $M$. Let $s_1,s_2,s_3$, and
$s_4$ be $4$ grandchildren of $s$, denote by $S_1,S_2,S_3$, and
$S_4$ the sets of those descendants of $s_1,s_2,s_3$, and $s_4$,
respectively, which belong to generations $2^k+3$ and later ones,
where we mean that $s$ is generation $0$. Then only two out of the
four sets $S_1,S_2,S_3$, and $S_4$ can have vertices whose
$F_n$-images are not in $M$.
\end{lemma}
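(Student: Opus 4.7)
My plan is to argue by contradiction: assume that at least three of the four sets $S_1,S_2,S_3,S_4$ contain a vertex whose $F_n$-image lies outside $M$, and derive a contradiction from the fact that a subdiamond has only two exits. The architecture is a pigeonhole argument combined with the triangle inequality applied with the bilipschitz bounds \eqref{E:bilip}.

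First I would locate, inside each offending $S_i$, the last interior vertex before $M$ is exited. Since every descendant of $s$ in generations $1,2,\dots,2^k+2$ is mapped into $M$ by hypothesis, any path from $s$ to a vertex of $S_i$ whose image lies outside $M$ must contain an edge $(u_i,w_i)$ with $F_n u_i\in M$ and $F_n w_i\notin M$, where $w_i$ sits at generation $\geq 2^k+3$ and so $u_i$ sits at generation $\geq 2^k+2$. Because the only vertices separating $M$ from the rest of $D_{m(n)}$ are its top and bottom (the subdiamond evolved from a single edge), any geodesic from $F_n u_i$ to $F_n w_i$ in $D_{m(n)}$ must pass through one of these two exits; moreover its length is bounded by $2^k\cdot 2^{p(n)}$ by the upper estimate in \eqref{E:bilip}. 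This is precisely \eqref{E:leave}: the path through $(u_i,w_i)$ leaves $M$ through one of the two exits.

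Now apply the pigeonhole principle. If three of the $S_i$'s contain such an edge, then two of them, say $S_i$ and $S_j$ with $s_i\neq s_j$, must leave $M$ through the same exit $v$. From \eqref{E:leave} both $F_n u_i$ and $F_n u_j$ lie within distance $2^k\cdot 2^{p(n)}$ of $v$, giving
\[
d_{D_{m(n)}}(F_n u_i,F_n u_j)\ \leq\ 2^{k+1}\cdot 2^{p(n)}.
\]
In the other direction, $u_i$ and $u_j$ are descendants of the distinct grandchildren $s_i,s_j$ of $s$, so their lowest common ancestor in $T_n$ is either $s$ itself or a child of $s$; together with the fact that both $u_i,u_j$ sit at generation $\geq 2^k+2$ this forces $d_{T_n}(u_i,u_j)\geq 2(2^k+2)-2=2^{k+1}+2$. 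The lower bilipschitz estimate in \eqref{E:bilip} then yields
\[
d_{D_{m(n)}}(F_n u_i,F_n u_j)\ \geq\ 2^{p(n)}\bigl(2^{k+1}+2\bigr)\ >\ 2^{k+1}\cdot 2^{p(n)},
\]
contradicting the previous display.

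The main subtlety I expect is the sharpness of the generation count. The strict inequality above has a slack of exactly $2^{p(n)+1}$, which is what the ``$+2$'' in ``$2^k+2$ generations'' buys us; with only $2^k+1$ clean generations inside $M$ the upper and lower estimates would merely match and no contradiction would follow. Once the depth is booked correctly and the cut-vertex structure of a subdiamond is used to put us into the framework of \eqref{E:leave}, the pigeonhole--triangle inequality combination closes the argument.
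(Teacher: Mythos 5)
Your argument is correct and is essentially the paper's proof: locate the first exiting edge $(u_i,w_i)$ on each of the three root-to-vertex paths, pigeonhole two of them onto a common exit $v$, and contradict the lower bilipschitz bound using the tree distance between descendants of distinct grandchildren. The only cosmetic difference is that the paper picks from each exiting pair the endpoint nearer to $v$, getting the sharper upper bound $2^k\cdot 2^{p(n)}$ where you get $2^{k+1}\cdot 2^{p(n)}$; your comparison $2^{p(n)}(2^{k+1}+2)>2^{k+1}\cdot 2^{p(n)}$ still closes the contradiction, and your remark about the role of the ``$+2$'' is accurate.
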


\begin{proof} It is easy to check that the pairwise distances between $F_n$-images of $S_1$, $S_2$,
$S_3$, and $S_4$ in $D_{m(n)}$ are  $\ge 2^{p(n)}(2^{k+1}+4)$.
Suppose that at least three of the sets $S_1, S_2, S_3, S_4$
contain vertices whose $F_n$-images are not in $M$. We may assume
that the vertices are $i_1\in S_1$, $i_2\in S_2$, and $i_3\in
S_3$. Consider paths $\{P_i\}_{i=1}^3$ in $S$ joining these
vertices and $s$. Each of these paths leaves $M$. Since there are
two exits and three paths, two of the paths should leave $M$
through the same exit $v$. Since the paths cannot leave $M$ within
the first $2^k+2$ generations from $s$,  each of these two $P_i$
contains consecutive vertices $u_i$ and $w_i$ satisfying
\eqref{E:leave} such that $F_n(u_i)\in M$, $F_n(w_i)\notin M$, and
$u_i$ are in the generation at least $2^k+2$ from $s$ and $w_i$
are in the generation at least $2^k+3$. It is clear that this
implies the existence of two vertices, $t$ and $s$, among these
$u_i$ and $w_i$ such that $d_{T_n}(t,s)\ge 2^{k+1}+2$ and
$d_{D_{m(n)}}(F_nt,F_ns)\le
d_{D_{m(n)}}(F_nt,v)+d_{D_{m(n)}}(v,F_ns)\le 2^k2^{p(n)}$. This
contradicts the bilipschitz condition \eqref{E:bilip}.
\end{proof}

Lemma \ref{L:Only2Leave} suggests the following plan for
completion of the proof. We find a subdiamond $M$ with height
$2^h$ in $D_{m(n)}$ which contains all of the $F_n$-images of the
first $2^{k}+2$ generations of a rooted in $s$ subtree $S$ (of
$T_n$), and the total number of generations in $T_n$ which contain
descendants of $s$ is more than $2(h-p(n)+1)$.

In fact, if we find such an $M$, by Lemma \ref{L:Only2Leave}, more
that half of the descendants of $s$ should have their $F_n$-images
in $M$. The number of such descendants is $> 2\cdot 4^{h-p(n)}$,
which is more than the possible number of $2^{p(n)}$-separated
points in $M$ estimated by Lemma \ref{L:Entropy}, and so this
would complete the proof.
\medskip

So it remains to find a suitable subdiamond $M$. The reasons for
which it is possible is that,  on one hand, vertices which appear
later (in diamond's construction) are ``dense'' in the diamond
and, on the other hand, they have neighborhoods which are
contained in subdiamonds of controlled size. We present details
below.\medskip

Now we introduce generations of vertices in a diamond. We label
them from the end. Generation number $1$ is the set of vertices
appeared in the last step of the construction of $D_{m(n)}$.
Generation number $2$ is the set of vertices appeared in the
previous step of the construction, so on, there are $m(n)$
generations (two original vertices do not belong to any of the
generations.) The following is clear from the construction:

\begin{observation}\label{O:Generat} {\bf (1)} Let $v$ be a vertex of generation number $r$,
$r\in\{1,\dots,m(n)\}$. Then the $2^{r-1}$-neighborhood of $v$ is
contained in a subdiamond of height $2^r$.\medskip

\noindent{\bf (2)} Let $Z_r$ be the set of all vertices of
generation number $r$. Then the connected components of
$D_{m(n)}\backslash Z_r$ have diameters $<2^{r}$.
\end{observation}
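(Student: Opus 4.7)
My plan is to read off both parts from the recursive structure of the diamond construction. A generation-$r$ vertex $v$ of $D_{m(n)}$ is, by definition, a middle vertex of some $4$-cycle introduced in step $m(n)-r+1$, i.e., one that replaced an edge $e=uu'$ of $D_{m(n)-r}$. Hence $v$ lies in the subdiamond $\Delta_e\subset D_{m(n)}$ that evolved from $e$; this $\Delta_e$ has height $2^r$ with $u,u'$ as its top and bottom.

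For part (1), I would first observe (by a straightforward induction on the remaining $r-1$ construction steps, using that each edge-replacement doubles the distance between previously existing vertices) that $v$ ends up at distance exactly $2^{r-1}$ from each of $u$ and $u'$ inside $\Delta_e$. Since $\Delta_e$ is connected to the rest of $D_{m(n)}$ only through $u$ and $u'$, any path that leaves $\Delta_e$ starting from $v$ must pass through one of these two vertices and hence has length at least $2^{r-1}+1$. Therefore the $2^{r-1}$-neighborhood of $v$ sits entirely inside $\Delta_e$, whose height is $2^r$.

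For part (2), my key sub-lemma is that the set of vertices in $\Delta_e$ at distance exactly $2^{r-1}$ from the top $u$ consists precisely of the two generation-$r$ vertices $a_e,b_e$ of $\Delta_e$. Granting this, removing $\{a_e,b_e\}$ from $\Delta_e$ separates it into a $u$-half and a $u'$-half, each of whose vertices sits at distance at most $2^{r-1}-1$ from the corresponding apex. Since passing between any two vertices of $V(D_{m(n)-r})$ in $D_{m(n)}$ requires traversing some height-$2^r$ subdiamond from top to bottom, which forces a visit to $Z_r$, the connected component of any $u\in V(D_{m(n)-r})$ in $D_{m(n)}\setminus Z_r$ equals the union, glued at $u$, of the $u$-halves of all subdiamonds incident to $u$. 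Every point of this component then lies within distance $2^{r-1}-1$ of $u$, so the diameter of the component is at most $2(2^{r-1}-1)=2^r-2<2^r$.

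The only step that requires genuine work is the sub-lemma locating the equator of $\Delta_e\cong D_r$; I would prove it by induction on $r$. The base $r=1$ is immediate since $D_1$ is the $4$-cycle $u,a_e,u',b_e$. For the inductive step, any vertex already present in the inner $D_{r-1}$ has its distance from $u$ doubled in passing to $D_r$, so by induction it lies on the $D_r$-equator iff it equals $a_e$ or $b_e$; any vertex introduced in the final internal subdivision is at distance $1$ from two vertices of the inner $D_{r-1}$ whose distances from $u$ are even, and hence sits at odd distance from $u$, contradicting the evenness of $2^{r-1}$ for $r\geq 2$.
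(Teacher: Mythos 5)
Your argument is correct: identifying a generation-$r$ vertex as an equatorial vertex of the height-$2^r$ subdiamond $\Delta_e$ evolved from an edge $e$ of $D_{m(n)-r}$, using that $\Delta_e$ meets the rest of the graph only in its two apexes, and locating the equator via the parity/doubling induction all check out, and the diameter bound $2(2^{r-1}-1)<2^r$ for the components of $D_{m(n)}\setminus Z_r$ follows as you say. The paper offers no proof at all here (it states the observation as ``clear from the construction''), and your write-up is exactly the natural verification the author had in mind, so there is nothing to compare beyond noting that you have supplied the omitted details correctly.
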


Now we consider the binary tree $T_n$ of depth $n=L(k)$, where
$L(k)$ is a ``large'' number depending only on $k$, we shall
specify our choice of $L(k)$ later.

Let $q=q(k)\in \mathbb{N}$ ($q(k)$ also will be chosen later). No
matter how we choose the image of the root of $T_n$ in $D_{m(n)}$,
by Observation \ref{O:Generat} {\bf (2)}, within $2^{q}$ steps,
following $F_n$-images of any of the descending paths in $T_n$, we
shall ``pass over'' (recall that we make ``steps'' of lengths
between $2^{p(n)}$ and $2^{k+p(n)}$ for each edge) a vertex $z$
belonging to the generation $Z_{q+p(n)}$, we can ``miss'' $z$ by
$2^{k+p(n)-1}$, let $s$ be the vertex in a generation with number
$\le 2^q$ of $T_n$ such that $d_{D_{m(n)}}(F_n(s),z)\le
2^{k+p(n)-1}$. After that we consider the subtree of descendants
of $s$ for $2^{k}+2$ generations (as we did in Lemma
\ref{L:Only2Leave}). All of them are in the
$2^{p(n)}\cdot(2^{k-1}+(2^{k}+2)2^k)$-neighborhood of $z$. Now we
pick $q=q(k)$ in such a way that
\[2^{q(k)-1}\ge 2^{k-1}+(2^k+2)2^k.\]
Then, by Observation \ref{O:Generat} {\bf (1)}, all of the first
$2^{k}+2$ generations of descendants of $s$ (including $s$) are
mapped into a subdiamond $M$ of height $2^{q(k)+p(n)}$. Now we
pick $L(k)>2^{q(k)}+2(q(k)+1)$. With this choice, the subdiamond
$M$ and the vertex $s$ have the desired properties. We mean that
the total number of generations of $T_n=T_{L(k)}$ which contain
descendants of $s$ is more than $2(q(k)+1)=2((q(k)+p(n))-p(n)+1)$,
see the discussion following Lemma \ref{L:Only2Leave}.
\end{proof}

\section{Example of a series-parallel family admitting uniformly
bilipschitz embeddings into $\ell_2$}\label{S:SerPar}

\begin{proof}[Proof of Theorem \ref{T:SerParL2}] The graphs which we use for this construction are
close to diamond graphs (see the beginning of Section
\ref{S:IntroFin}), one of the differences is that we do not remove
edges of graphs which appear earlier in the sequence. Also the
metric is obtained by introducing weights of edges and the
corresponding shortest weighted path distance (subdividing edges
one can avoid using weights).\medskip

We pick a number $\ep\in\left(0,\frac12\right)$. The sequence
$\{W_n\}_{n=0}^\infty$ of {\it weighted diamonds} is defined in
terms of diamonds $\{D_n\}_{n=0}^\infty$ as follows:

\begin{itemize}

\item $W_0$ is the same as $D_0$

\item $W_1=D_1\cup W_0$ with edges of $D_1$ given weights
$\left(\frac12+\ep\right)$; weight of the edge of $W_0$ stays as
$1$ (as it was in the first step of the construction).

\item $W_2=D_2\cup W_1$ with edges  of $D_2$ given weights
$\left(\frac12+\ep\right)^2$; weights of the edges of $W_{1}$ stay
as they were in the previous step of the construction.

\item \dots.

\item $W_n=D_n\cup W_{n-1}$ with edges of $D_n$ given weights
$\left(\frac12+\ep\right)^n$; weights of the edges of  $W_{n-1}$
stay as they were in the previous step of the construction.

\end{itemize}

We consider $\{W_n\}$ as finite metric spaces whose elements are
vertices and the distance between two vertices is the weighted
length of the shortest path. We define embeddings
$F_n:W_n\to\ell_2$ as follows:

\begin{itemize}

\item The map $F_0$ maps the vertices of $D_0$ to $0$ and $e_0$,
respectively (where $\{e_i\}_{i=0}^\infty$ is the unit vector
basis of $\ell_2$). It is clear that $F_0$ is an isometric
embedding.

\item The map $F_{n}$, $n\ge1$, is an extension of the map
$F_{n-1}$. The description is generic for all $n\ge 1$. Each
vertex $w$ of $W_n\backslash W_{n-1}$ corresponds to two vertices
of $W_{n-1}$: $w$ is the vertex of the $2$-edge path joining $u$
and $v$. We map the vertex $w$ to
$\frac12(F_{n-1}(u)+F_{n-1}(v))\pm\omega_n e_{uv}$, where $e_{uv}$
is an element of $\{e_i\}_{i=1}^\infty$ picked for the edge $uv$
(we pick different $e_i$ for different edges) and $\pm$ are picked
differently for the pair of vertices $w,\widetilde w$ which
corresponds to the same edge $uv$ (our construction is such that
there are two such vertices); and $\omega_n$ is picked in such a
way that $||F_n(u)-F_n(w)||=d_{W_n}(u,w)$, so
$\omega_n=\sqrt{\ep+\ep^2}\left(\frac12+\ep\right)^{n-1}$.

\end{itemize}

Now we estimate the distortion. Observe that $F_n$ is
distance-preserving on edges. Therefore $\lip(F_n)\le 1$, and we
need to estimate the Lipschitz constant of $F_n^{-1}$ only.
\medskip

Let us start with $\lip(F_1^{-1})$, it is attained on the only
pair of vertices of $W_1$ which is not an edge, and it is
therefore
\[\lip(F_1^{-1})=\frac{1+2\ep}{2\sqrt{\ep+\ep^2}}.
\]

The estimate of $\lip(F_n^{-1})$, $n\ge 2$, can be done in general
in the following way. We consider any {\bf shortest} path between
two vertices in $W_n$.

\begin{note} Some caution is needed for arguments about shortest paths in $W_m$, because they can be quite different from shortest paths
in diamonds. For example, if $m$ is such that
\begin{equation}\label{E:ShrtUsDiag} \left(\frac12+\ep\right)+\left(\frac12+\ep\right)^2+\dots+\left(\frac12+\ep\right)^m\ge
1+\left(\frac12+\ep\right)^m\end{equation} (such number $m$
obviously exists if $\ep>0$), then a shortest path between two
vertices in $W_m$ can consist of the edge of $D_0$ and one more
edge of length $\left(\frac12+\ep\right)^m$.
\end{note}

\begin{claim}\label{C:le2} A shortest path between two vertices in $W_n$ can contain edges of each possible
length: \[1,\left(\frac12+\ep\right), \left(\frac12+\ep\right)^2,
\left(\frac12+\ep\right)^3, \dots\] at most twice, actually for
$1$ this can happen only once because there is only one such edge.
\end{claim}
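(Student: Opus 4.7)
The plan is to proceed by strong induction on $n$, exploiting the recursive self-similar decomposition $W_n = W_1 \cup \bigcup_{\alpha} W_n^{(\alpha)}$, where $\alpha$ ranges over the four spoke edges $AC, CB, AD, DB$ of $W_1$ and each $W_n^{(\alpha)} \cong (\tfrac12+\ep)\cdot W_{n-1}$ is glued to the hub $\{A,B,C,D\}$ only at $\alpha$'s two endpoints. A preliminary convexity observation (proved by a direct triangle-inequality computation using that the shortest ``outside'' pole-to-pole distance in any $W_n^{(\alpha)}$ is at least $1+(\tfrac12+\ep)$ via the diagonal $AB$, versus the inside pole-to-pole distance $(\tfrac12+\ep)$) shows that if $u,v$ both lie in some $W_n^{(\alpha)}$ then the shortest $u$--$v$ path stays inside, and the IH applied to the scaled $W_{n-1}$ settles this case.

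The main leverage will be an auxiliary ``pole bound'' proved simultaneously by induction: for every $m$ and every $w\in V(W_m)$, any shortest path from $w$ to either original pole $A$ or $B$ in $W_m$ uses at most \emph{one} edge of each weight. When $w$ lies in a substructure whose poles include the target, this reduces to the scaled IH via the convexity observation. When $w$'s substructure $W_m^{(\alpha)}$ has no pole equal to the target (say $\alpha = CB$ and target $A$), the shortest path must exit via $C$ or via $B$; the key computation is that routing through $C$ with an inside sub-path that also uses the top edge $CB$ spends two edges of weight $(\tfrac12+\ep)$ ($CB$ inside and $AC$ outside), and this is strictly longer by exactly $2\ep>0$ than routing through $B$ via the diagonal $AB$. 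Hence every optimal route either exits via $B$ (contributing one weight-$1$ edge) or exits via $C$ with inside sub-path avoiding $CB$ (contributing only the single $AC$ at weight $(\tfrac12+\ep)$); in each case the scaled IH closes out with at most one edge of each weight.

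To prove the claim itself, decompose a shortest path $P$ in $W_n$ according to the hub vertices $h_1,\dots,h_r$ it visits in order (distinct, by simplicity): $P$ is the concatenation of an initial piece in $W_n^{(\alpha_0)}$ from $u$ to $h_1$, a middle walk along hub edges of $W_1$ from $h_1$ to $h_r$, and a final piece in $W_n^{(\alpha_t)}$ from $h_r$ to $v$ (the convexity reduction having disposed of the case where $u,v$ lie in a common substructure). Simplicity of $P$ together with the ordering of hub visits prevents the initial piece from using the top edge $\alpha_0$ (since using it would require visiting the opposite pole as a hub before $h_1$), and likewise for the final piece. The pole bound then gives $\le 1$ edge of each weight $(\tfrac12+\ep)^k$ for $k\ge 2$ and $0$ edges of the scaled top weight $(\tfrac12+\ep)$ in each of initial and final. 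A direct case check in $W_1$ shows that a shortest hub-to-hub walk uses at most two edges and never mixes the diagonal $AB$ with a spoke, so the middle contributes either a single weight-$1$ edge or at most two weight-$(\tfrac12+\ep)$ edges, but not both. Summing the three contributions yields at most $1$ edge of weight $1$ and at most $2$ edges of each weight $(\tfrac12+\ep)^k$ for $k\ge 1$.

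The main obstacle is establishing the pole bound cleanly, specifically the subcase where the target pole is not one of $\alpha$'s poles: one has to compare the two exit routes \emph{weight-by-weight}, not merely length-by-length, and it is exactly the strict inequality $2\ep>0$ supplied by $\ep\in(0,\tfrac12)$ that rules out the two-spoke exit. A secondary subtlety is the convexity observation, which relies on the diagonal $AB$ of weight $1$ being the cheapest outside traversal between any two spoke poles; this is a short computation inside $W_1$ that does not depend on $n$.
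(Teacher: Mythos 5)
Your proposal is correct in outline and would yield a valid proof, but it follows a genuinely different route from the paper's. The paper argues locally along the path: it picks a longest edge $e$, of length $\left(\frac12+\ep\right)^k$, and considers the subdiamond $S$ whose diagonal (an actual edge of $W_n$ of weight $\left(\frac12+\ep\right)^{k-1}$, since no edges are ever removed) spawned $e$; because traversing $e$ and then reaching the top of $S$ costs at least $2\left(\frac12+\ep\right)^k=(1+2\ep)\left(\frac12+\ep\right)^{k-1}$, which the diagonal would shortcut, each of the two halves of the path on either side of $e$ is trapped inside $S$, and iterating the argument shows the edge lengths strictly decrease as one moves away from $e$ in either direction --- whence each length occurs at most once per side. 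Your argument instead runs a global induction on $n$ through the self-similar decomposition into four scaled copies of $W_{n-1}$ glued to the hub, supported by a convexity lemma and a pole lemma. The decisive inequality is the same in both proofs (a diagonal beats any two-spoke detour by exactly $2\ep>0$), but the paper's version is shorter and needs no auxiliary statements, whereas yours establishes a strictly stronger intermediate fact (shortest paths to a pole use at most \emph{one} edge of each weight) and an explicit picture of how shortest paths cross the hub, at the price of more case analysis. Two points in your sketch are left implicit and should be spelled out: that the middle portion of the path really stays on hub edges (an excursion into a copy's interior between its two poles costs at least $\left(\frac12+\ep\right)(1+2\ep)$, strictly more than the top edge --- the same shortcut argument again), and that the portion of a shortest path inside a copy is a single segment because a simple path meets each of the two gluing poles at most once; both are routine.
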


\begin{proof} Let $e$ be one of the longest edges
in the path and $\left(\frac12+\ep\right)^k$ be its length. As for
diamonds, we define weighted subdiamonds as subsets evolving from
edges (as sets of vertices they coincide with the subdiamonds
defined before). The edge from which a subdiamond evolved is
called its {\it diagonal}. Consider the subdiamond $S$ containing
$e$ with diagonal of length $\left(\frac12+\ep\right)^{k-1}$ (here
we assume that $k\ne 0$, for $k=0$ the statement is trivial). Let
$e=uv$, without loss of generality we may assume that $u$ is the
bottom of $S$ (turning the graph upside down, if needed).
\medskip

The rest of the path consists of two pieces: (1) The one which
starts at $v$; (2) The one which starts at $u$. We claim that the
part which starts at $v$ can never leave $S$. It obviously cannot
leave through $u$, it cannot leave through the top of $S$, let us
denote it $t$, because otherwise the piece of the path between $u$
and $t$ could be replaced by the diagonal of $S$, which is
strictly shorter.\medskip

This implies that the part of the path in $S$ can contain edges
only shorter than $\left(\frac12+\ep\right)^k$ (except $e$). For
the next edge in this part of the path we can repeat the argument
and get (by induction) that lengths of edges in the remainder of
the path in $S$ are strictly decreasing.\medskip

The part of the path which starts at $u$ can be considered
similarly.
\end{proof}

Now we need to analyze the ``directions'' in which the path can
go. Let $e=uv$, the subdiamond $S$, $t$, and the two parts of the
shortest path be as above.\medskip

If the part of the path which starts a $u$ does not enter $S$, the
estimate of the Lipschitz constant can be based just on the part
contained in $S$. Here is the estimate:

\begin{itemize}

\item By Claim \ref{C:le2} we get that the length of the path is
$\displaystyle{\le
2\left(\frac12+\ep\right)^k\cdot\frac1{\frac12-\ep}}$

\item On the other hand the $e_{ut}$-coordinate of the path

\begin{itemize}

\item[(a)] Does not change along the path which starts at $u$ and
leaves $S$.

\item[(b)] Changes by
$\left(\frac12+\ep\right)^{k-1}\sqrt{\ep+\ep^2}$ when we traverse
$e$.

\item[(c)] Cannot change by more than
\begin{equation}\label{E:MaxChange}\left(1-\left(\frac12\right)^m\right)\left(\frac12+\ep\right)^{k-1}\sqrt{\ep+\ep^2}\end{equation}
on the path which starts at $v$, where $m\in \mathbb{N}$ is the
least number satisfying \eqref{E:ShrtUsDiag}.

\end{itemize}

Only the statement (c) requires a proof. We observe that
traversing the image of an edge of length
$\left(\frac12+\ep\right)^{k+d}$ $(d=0,1,2,\dots)$ of this path we
change the $e_{ut}$ coordinate by
$\left(\frac12\right)^d\left(\frac12+\ep\right)^{k-1}\sqrt{\ep+\ep^2}$
(this follows by induction from our definition of $F_n$).
Therefore, if the change exceeds \eqref{E:MaxChange}, it implies
that the path staring at $v$ contains edges of all lengths
\[\left(\frac12+\ep\right)^{k+1},\left(\frac12+\ep\right)^{k+2},\dots,\left(\frac12+\ep\right)^{k+m}.\]
But then, because of \eqref{E:ShrtUsDiag}, it is not a shortest
path between its ends (using the diagonal we get a shorter path).
\end{itemize}

The same argument works in the case where the path starting at $u$
stays inside $S$, because in this case (as is easy to see) the
sign of the $e_{ut}$-coordinate on this part of the path is
different from its sign on the first part of the path.
\medskip

Therefore in both cases we get that the quotient of (the length of
the path)/(the distance between the images) does not exceed
\[\frac{2^{m+1}}{\left(\frac12-\ep\right)\sqrt{\ep+\ep^2}}\left(\frac12+\ep\right).\]
Since $m$ depends only on the choice of $\ep$ (and not on $n$), we
get the desired estimate for $\sup_n\lip(F_n^{-1})$.
\end{proof}

\section{Characterizations using one test-space}\label{S:OneSpace}

Our purpose is to show that a class of Banach spaces for which
there is a sequence of finite test-spaces has one test-space,
similar results hold for test-spaces satisfying some additional
conditions. The proof is very simple (it simplifies some of the
results of \cite{Ost13b}).

\begin{proposition} {\bf (a)} Let $\{S_n\}_{n=1}^\infty$ be a sequence of finite test-spaces for some class $\mathcal{P}$ of Banach
spaces containing all finite-dimensional Banach spaces. Then there
is a metric space $S$ which is a test-space for $\mathcal{P}$.
\medskip

{\bf (b)} If $\{S_n\}_{n=1}^\infty$ are

\begin{itemize}

\item unweighted graphs,

\item trees,

\item graphs with uniformly bounded degrees,

\end{itemize}
then $S$ also can be required to have the same property.
\end{proposition}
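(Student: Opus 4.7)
The plan is to glue the $S_n$'s into a single connected, locally finite metric space and to deduce the embeddability of $S$ from that of its finite subsets via Theorem \ref{T:FinDet}. Choose basepoints $b_n \in S_n$ and a rapidly increasing sequence $\{L_n\}$ of positive integers. Form $S$ from the disjoint union $\bigsqcup_n S_n$ by inserting, for each $n$, a path of $L_n$ unit-length edges joining $b_n$ to $b_{n+1}$, and equip $S$ with the shortest-path metric. Then each $S_n$ sits isometrically in $S$, and since every ball of finite radius meets only finitely many of the $S_n$'s (each of which is finite), $S$ is locally finite. For part (b), the construction preserves the three relevant structural properties: $S$ is unweighted because all its edges have length $1$; $S$ is a tree whenever every $S_n$ is (attaching trees at single vertices via paths introduces no cycles); and degrees in $S$ are bounded by $\Delta+2$ whenever those in the $S_n$ are bounded by $\Delta$.

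The easy direction is immediate: any bilipschitz embedding $S \hookrightarrow X$ restricts to uniformly bilipschitz embeddings of each $S_n$ into $X$, so the test-set property of $\{S_n\}$ forces $X \notin \mathcal{P}$.

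For the hard direction, suppose $X \notin \mathcal{P}$. Then $X$ is infinite-dimensional (since $\mathcal{P}$ contains all finite-dimensional spaces), and there exist $D$-bilipschitz embeddings $f_n : S_n \to X$ normalized so that $f_n(b_n) = 0$. By Theorem \ref{T:FinDet} it suffices to embed every finite $F \subset S$ into $X$ with distortion bounded by a constant depending only on $X$. For this, I would send the spine of $S$ isometrically onto a line $\mathbb{R} e_0 \subset X$, with $b_n \mapsto c_n e_0$ where $c_n = \sum_{k<n} L_k$, and extend by $\phi(x) = c_n e_0 + f_n(x)$ for $x \in S_n$. A termwise triangle-inequality estimate gives $||\phi(x) - \phi(y)|| \le D\, d_S(x,y)$.

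The main obstacle will be the matching lower bound, which is delicate because a priori the bumps $f_n(S_n)$ may wobble off the spine direction and interfere across different $n$'s. The resolution is to choose $L_n$ growing fast enough that for any fixed $D = D(X)$, all but finitely many pairs $(n,m)$ satisfy $|c_n - c_m| \gg D\,(\diam(S_n) + \diam(S_m))$; a direct computation then shows that on these ``tail'' pairs the bilipschitz constant of $\phi^{-1}$ is bounded by an absolute constant. The finitely many remaining indices form a fixed finite sub-metric-space -- an ``initial block'' -- that embeds into the infinite-dimensional $X$ with some finite distortion $C(X)$, and using a James-style $\lambda$-norming argument as in the proof of Theorem \ref{T:hyperb}(a) one may position this block's embedding in a subspace of $X$ essentially independent of the tail, combining the two into a single bilipschitz embedding of $F$ whose distortion depends only on $X$. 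Theorem \ref{T:FinDet} then delivers the bilipschitz embedding $S \hookrightarrow X$.
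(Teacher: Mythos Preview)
Your construction of $S$ matches the paper's, and the easy direction is the same. The hard direction, however, is handled quite differently, and the paper's route is substantially simpler.

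The paper does \emph{not} invoke Theorem \ref{T:FinDet} at all; it builds a global bilipschitz embedding of $S$ directly. The key device you are missing is this: since each $S_n$ is finite, each image $F_n(S_n)$ lies in a finite-dimensional subspace of $X$, hence in some hyperplane; and since all hyperplanes of $X$ are mutually isomorphic with a uniform constant, one may arrange that \emph{every} $F_n(S_n)$ sits inside a single fixed hyperplane $H=\ker x^*$, where $x^*$ attains its norm at some unit vector $x$. Now translate $F_n(S_n)$ by $\lambda_n x$ (with $\lambda_n$ the accumulated spine length) and map the spine isometrically into $\mathbb{R}x$. The functional $x^*$ then cleanly separates the pieces: for $u\in S_n$, $v\in S_m$ with $n<m$ one has $\|F(u)-F(v)\|\ge|x^*(F(u)-F(v))|=\lambda_m-\lambda_n$, and the mild requirement $L_n\ge\max\{\diam S_n,\diam S_{n+1}\}$ already forces $d_S(u,v)\le 3(\lambda_m-\lambda_n)$. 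No ``initial block,'' no tail/head splitting, no norming argument.

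Your approach can be pushed through, but as written it has gaps. Because your $f_n$'s are not confined to a hyperplane transverse to $e_0$, cross-terms $f_n(x)-f_m(y)$ can cancel against $(c_n-c_m)e_0$; you compensate by taking $L_n$ superlinearly large in $\diam S_n$ and then treating a residual initial block separately. The final ``James-style norming'' combination of the block and the tail is only asserted: you must arrange that the tail embedding lands in the annihilator $(M_1)_\top$ (which requires re-embedding each tail $f_n$ there via the hyperplane-isomorphism trick anyway), match the two pieces at the cut vertex, and then run a genuine case analysis to get the lower bound across the seam. All of this is doable, but it is considerably more work than the paper's two-line hyperplane argument, and your appeal to Theorem \ref{T:FinDet} becomes superfluous once the global map is in hand.
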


\begin{proof} In all cases the space $S$ will contain subspaces
isometric to each of $\{S_n\}_{n=1}^\infty$. Therefore the only
implication which is nontrivial is that the embeddability of
$\{S_n\}_{n=1}^\infty$ implies the embeddability of $S$.
\medskip

Each finite metric space can be considered as a weighted graph
with it shortest path distance. In all cases we construct the
space $S$ as an infinite graph by joining $S_n$ with $S_{n+1}$
with a path $P_n$ whose length is $\ge\max\{\diam S_n, \diam
S_{n+1}\}$. To be more specific, we pick in each $S_n$ a vertex
$O_n$ and let each of the paths mentioned above be a path joining
$O_n$ with $O_{n+1}$. We endow the infinite graph $S$ with the
shortest path distance. It is clear that all of the conditions are
satisfied. It remains only to show that each infinite-dimensional
Banach space which admits bilipschitz embeddings of
$\{S_n\}_{n=1}^\infty$ with uniformly bounded distortions admits a
bilipschitz embedding of $S$.
\medskip

So let $X$ be an infinite dimensional Banach space admitting
bilipschitz embeddings of spaces $\{S_n\}_{n=1}^\infty$ with
uniformly bounded distortions. Let $F_n:S_n\to X$ be the
corresponding embeddings. We may assume that $\lip(F_n)=1$ for
each $n$ and $\lip((F_n)^{-1})$ are uniformly bounded. Since the
considered metric spaces are finite and all of the hyperplanes in
$X$ are isomorphic (with uniform bound on all such isomorphisms)
we may assume that images of all of $F_n$ are contained in a fixed
hyperplane $H$ in $X$. We may assume that $H$ is a kernel of a
functional $x^*\in X^*$, $||x^*||=1$ which attains its norm on
some vector $x\in X$, $||x||=1$. We may assume also that
$F_n(O_n)=0$ for each $n$. Now we modify $F_n$ by shifting each of
the $F_n(S_n)$, $n\ge 2$, by $\lambda_n x$, where $\lambda_n$ is
the sum of the lengths of the paths $P_1,\dots,P_{n-1}$, and we
map vertices of paths $P_1,\dots,P_{n-1}$ to the one-dimensional
space spanned by $x$ in such a way that the distance between any
two adjacent vertices is $1$. The Lipschitz constant of the
obtained map $F:S\to X$ is equal to $1$ since it is does not
increase distance between endpoints of edges.\medskip

To estimate $\lip(F^{-1})$ consider any two vertices $u$ and $v$
in $S$. It is clear that if $u,v$ are in the same $S_n$, then
$d_S(u,v)\le \lip((F_n)^{-1})||F(u)-F(v)||$. It is also clear that
the embedding $F$ is an isometry on the union of the paths
$\{P_n\}$. Therefore it remains to consider the cases (i) $u$ and
$v$ are in different $S_n$; (ii) $u$ is in one of the $\{S_n\}$
and $v$ is in one of the $\{P_n\}$.
\medskip

Case (i). Let $u\in S_n$, $v\in S_m$, $n<m$. In this case
$d_S(u,v)=d_{S_n}(u,O_n)+($the sum of lengths of paths
$P_n,\dots,P_{m-1})+d_{S_m}(O_m,v)$, and $||F(u)-F(v)||\ge
|x^*(F(u)-F(v))|=($the sum of lengths of paths
$P_n,\dots,P_{m-1})$. Recalling the way in which we choose the
lengths of $\{P_n\}$, we get the desired conclusion.

Case (ii). Let $u\in S_n$, $v\in P_m$.  In this case we have to
consider two subcases: $d_S(v,O_n)\ge
\frac1{2\lip((F_n)^{-1})}d_S(u,O_n)$ and $d_S(v,O_n)<
\frac1{2\lip((F_n)^{-1})}d_S(u,O_n)$. In the first subcase we have
\[||F(u)-F(v)||\ge
|x^*(F(u)-F(v))|=d_S(v,O_n)\ge\frac1{4\lip((F_n)^{-1})}d(u,v).\]

In the second subcase we get
\[\begin{split}||F(u)-F(v)||&\ge ||F(u)-F(O_n)||-||F(v)-F(O_n)||\\&\ge
\frac1{\lip((F_n)^{-1})}d_S(u,O_n)-d_S(v,O_n)\\&>\frac1{2\lip((F_n)^{-1})}d_S(u,O_n)\\&\ge
\frac1{4\lip((F_n)^{-1})}d_S(u,v)\qedhere
\end{split}
\]
\end{proof}

\end{large}

\renewcommand{\refname}{\section{References}}

\begin{small}

\end{small}


\begin{thebibliography}{99}

\bibitem{Arz01} G.\,N.~Arzhantseva, On quasiconvex subgroups of word hyperbolic groups, {\it Geom. Dedicata}, {\bf 87} (2001), no. 1--3, 191--208.


\bibitem{Bau07} F.~Baudier, Metrical characterization of
super-reflexivity and linear type of Banach spaces, {\it Archiv
Math.},  {\bf  89}  (2007),  no. 5, 419--429.

\bibitem{Bea82} B.~Beauzamy, {\it Introduction to Banach spaces and their
geometry}. North-Holland Mathematics Studies, {\bf 68}. Notas de
Matem\'atica [Mathematical Notes], {\bf 86}. North-Holland
Publishing Co., Amsterdam-New York, 1982. Second Edition: 1985.

\bibitem{BL00} Y.~Benyamini, J.~Lindenstrauss, {\it Geometric nonlinear
functional analysis}. Vol. {\bf 1}. American Mathematical Society
Colloquium Publications, {\bf 48}. American Mathematical Society,
Providence, RI, 2000.

\bibitem{Bou86} J.~Bourgain, The metrical interpretation of superreflexivity in
Banach spaces, {\it Israel J. Math.}, {\bf 56} (1986), no. 2,
222--230.

\bibitem{BH99} M.\,R.~Bridson, A.~Haefliger, {\it Metric
spaces of non-positive curvature}. Grundlehren der Mathematischen
Wissenschaften [Fundamental Principles of Mathematical Sciences],
{\bf 319}. Springer-Verlag, Berlin, 1999.

\bibitem{BKL07} B.~Brinkman, A.~Karagiozova, J.\,R.~Lee, Vertex cuts,
random walks, and dimension reduction in series-parallel graphs,
in: {\it STOC'07—Proceedings of the 39th Annual ACM Symposium on
Theory of Computing}, 621--630, ACM, New York, 2007.

\bibitem{BDS07} S.~Buyalo, A.~Dranishnikov, V.~Schroeder,
Embedding of hyperbolic groups into products of binary trees, {\it
Invent. Math.}, {\bf  169}  (2007),  no. 1, 153--192.

\bibitem{BS07} S.~Buyalo, V.~Schroeder,
{\it Elements of asymptotic geometry}, EMS Monographs in
Mathematics, European Mathematical Society (EMS), Z\"urich, 2007.

\bibitem{DGO11} F.~Dahmani, V.~Guirardel, D.~Osin, Hyperbolically embedded subgroups and rotating families in
groups acting on hyperbolic spaces, {\tt arXiv:1111.7048v3}.

\bibitem{DGZ93} R.~Deville,  G.~Godefroy,  V.~Zizler,
{\it Smoothness and renormings in Banach spaces}, Pitman
Monographs and Surveys in Pure and Applied Mathematics, {\bf 64},
Longman Scientific \&\ Technical, Harlow; copublished in the
United States with John Wiley \&\ Sons, Inc., New York, 1993.

\bibitem{DL97} M.~Deza, M.~Laurent, {\it Geometry of cuts and metrics}. Algorithms and Combinatorics, {\bf 15}.
Springer-Verlag, Berlin, 1997.

\bibitem{Dul78} D.~van Dulst, {\it Reflexive and superreflexive Banach spaces}.
Mathematical Centre Tracts, {\bf 102}. Mathematisch Centrum,
Amsterdam, 1978.

\bibitem{EO81} J.~Elton, E.~Odell, The unit ball of every
infinite-dimensional normed linear space contains a
$(1+\ep)$-separated sequence. {\it Colloq. Math.} {\bf 44} (1981),
no. 1, 105--109.

\bibitem{Enf72} P.~Enflo, Banach spaces which can be given an equivalent
uniformly convex norm, {\it Israel J. Math}, {\bf 13} (1972),
281--288.

\bibitem{Epp92} D.~Eppstein, Parallel recognition of series-parallel graphs.
{\it Inform. and Comput.} {\bf 98} (1992), no. 1, 41--55.

\bibitem{Gro87} M.~Gromov, Hyperbolic groups, in: {\it Essays in group theory}, 75--263, Math. Sci. Res. Inst. Publ., {\bf 8},
Springer, New York, 1987.; Russian translation: Institute of
Computer Science, Izhevsk, 2002.

\bibitem{GNRS04} A.~Gupta, I.~Newman, Y.~Rabinovich,
A.~Sinclair, Cuts, trees and $\ell_1$-embeddings of graphs, {\it
Combinatorica}, {\bf 24} (2004) 233--269; Conference version in:
{\it 40th Annual IEEE Symposium on Foundations of Computer
Science}, 1999, pp.~399--408.

\bibitem{Jam64} R.\,C.~James, Uniformly non-square Banach spaces. {\it Ann. of Math.} (2) {\bf 80} (1964), 542--550.

\bibitem{Jam72a} R.\,C.~James, Some self-dual properties of normed linear spaces, in: {\it Symposium
on Infinite-Dimensional Topology} (Louisiana State Univ., Baton
Rouge, La., 1967),  pp. 159--175. Ann. of Math. Studies, No. {\bf
69}, Princeton Univ. Press, Princeton, N.\,J., 1972.

\bibitem{Jam72b} R.\,C.~James, Super-reflexive Banach spaces, {\it Canad. J. Math.}, {\bf 24}
(1972), 896--904.

\bibitem{JS09} W.\,B.~Johnson, G. Schechtman,
Diamond graphs and super-reflexivity, {\it J. Topol. Anal.}, {\bf
1} (2009), no. 2, 177--189.

\bibitem{Klo13+} B.~Kloeckner, Yet another short proof of the Bourgain's distortion estimate
for embedding of trees into uniformly convex Banach spaces, {\it
Israel J. Math.}, to appear, available at {\tt
http://www-fourier.ujf-grenoble.fr/$\sim$bkloeckn/recherche.html}

\bibitem{Kot75} C.\,A.~Kottman, Subsets of the unit ball that are
separated by more than one. {\it Studia Math.} {\bf 53} (1975),
no. 1, 15--27.

\bibitem{MN13}  M.~Mendel, A.~Naor, Markov convexity and local rigidity
of distorted metrics, {\it  J. Eur. Math. Soc.} (JEMS), {\bf 15}
(2013), no. 1, 287--337; Conference version: {\it Computational
geometry} (SCG'08), 49--58, ACM, New York, 2008.

\bibitem{Ost12} M.\,I.~Ostrovskii,
Embeddability of locally finite metric spaces into Banach spaces
is finitely determined, {\it Proc. Amer. Math. Soc.}, {\bf 140}
(2012), 2721--2730.

\bibitem{Ost13} M.\,I.~Ostrovskii, Metric Embeddings: Bilipschitz and Coarse Embeddings into Banach Spaces,
de Gruyter Studies in Mathematics, {\bf 49}. Walter de Gruyter \&\
Co., Berlin, 2013.

\bibitem{Ost13b} M.\,I.~Ostrovskii,  Test-space characterizations of some classes of Banach spaces, in: {\it Algebraic Methods in Functional Analysis},
The Victor Shulman Anniversary Volume,  I.\,G.~Todorov,
L.~Turowska (Eds.), {\it Operator Theory: Advances and
Applications}, Vol. {\bf 233}, Birkh\"auser, Basel, 2013,
pp.~103--126.

\bibitem{Pis11} G.~Pisier, Martingales in Banach spaces
(in connection with type and cotype), Lecture notes of a course
given at l'Institut Henri Poincar\'e, February 2--8, 2011, 242 pp;
see the web site: {\tt
http://perso-math.univ-mlv.fr/users/banach/Winterschool2011/}

\bibitem{RR98} Y.~Rabinovich, R.~Raz, Lower bounds on the
distortion of embedding finite metric spaces in graphs, {\it
Discrete Comput. Geom.}, {\bf 19} (1998), no. 1, 79--94.

\bibitem{SS70} J.\,J.~Sch\"affer, K.~Sundaresan, Reflexivity and
the girth of spheres. {\it Math. Ann.} {\bf 184} (1969/1970)
163--168.

\bibitem{Sis13} A.~Sisto, Quasi-convexity of hyperbolically embedded
subgroups, {\tt arXiv:1310.7753v1}.

\end{thebibliography}
\end{document}